\pgfplotsset{compat=1.11}
\newtheorem*{lemma*}{Lemma}
\newtheorem{theorem}{Theorem}[section]
\newtheorem{proposition}[theorem]{Proposition}
\newtheorem{lemma}[theorem]{Lemma}
\newtheorem{conjecture}{Conjecture}
\newtheorem*{open question}{Open Question}
\newtheorem{question}{Question}
\newtheorem*{question*}{Question}
\theoremstyle{definition}
\newtheorem{claim}[theorem]{Claim}
\newtheorem*{claim*}{Claim}
\newtheorem{definition}[theorem]{Definition}
\theoremstyle{remark}
\newtheorem{remark}[theorem]{Remark}
\newcommand{\interior}[1]{\accentset{\mathrm{\scriptscriptstyle o}}{#1}}
\newcommand{\sus}{\mathcal S}
\newcommand{\rko}{\widetilde{KO}}
\newcommand{\sphere}{\mathbb S}
\newcommand{\disk}{\mathbb D}
\newcommand{\Sc}{\textup{Sc}}
\newcommand{\ualg}{\mathcal A}
\newcommand{\uspace}{B}
\newcommand{\ind}{\mathrm{Ind}}
\newcommand{\supp}{\mathrm{Supp}}
\numberwithin{equation}{section}
\begin{document}
\title[On Gromov's compactness question]{On Gromov's compactness question regarding positive scalar curvature}

\author{Shmuel Weinberger}
\address[Shmuel Weinberger]{ Department of Mathematics, University of Chicago }
\email{shmuel@math.uchicago.edu}
\thanks{The first author is partially supported by NSF 2105451.}

\author{Zhizhang Xie}
\address[Zhizhang Xie]{ Department of Mathematics, Texas A\&M University }
\email{xie@math.tamu.edu}
\thanks{The second author is partially supported by NSF 1800737 and 1952693.}
\author{Guoliang Yu}
\address[Guoliang Yu]{ Department of
	Mathematics, Texas A\&M University}
\email{guoliangyu@math.tamu.edu}
\thanks{The third author is partially supported by NSF 1700021, 2000082, and the Simons Fellows Program.}

	\maketitle

	\begin{abstract}
		In this paper, we give both positive and negative answers to Gromov's compactness question regarding positive scalar curvature metrics on noncompact manifolds. First we construct examples that give a negative answer to Gromov's compactness question. These examples are based on the non-vanishing of certain index theoretic invariants that arise at the infinity of the given underlying  manifold.  This is a $ \sideset{}{^1}\varprojlim$ phenomenon and  naturally leads one to conjecture that Gromov's compactness question has a positive answer provided that these $ \sideset{}{^1}\varprojlim$ invariants  also vanish. We prove this is indeed the case for a class of $1$-tame manifolds. 
	\end{abstract}

\section{Introduction}
In the past several years, Gromov has formulated an extensive list of conjectures and
open questions on  scalar curvature \cite{MR3816521, Gromov:2019aa}. This has 
given rise to new perspectives on scalar curvature and inspired a wave of recent activity in this area.  Among his many open questions,  
Gromov proposed the following compactness  question regarding positive scalar curvature on noncompact manifolds in \cite[Section 3.6.1]{Gromov:2019aa}. 

	\begin{question}[Gromov's compactness question]\label{conj:comp}
	Let $X$ be a smooth manifold.  Suppose for any given compact subset $V\subset X$ and any $\rho>0$, there exists a (non-complete) Riemannian metric on $X$ with scalar curvature $\geq 1$ such that the closed $\rho$-neighborhood $N_\rho(V)$ of $V$ in $X$ is compact. Then $X$ admits a complete Riemannian metric with scalar curvature $\geq 1$. 
\end{question}

In this paper, we construct both positive and negative examples for the above compactness question of Gromov. First, we construct negative examples to show that  Gromov's compactness question, as stated in its complete generality, is false.  These negative examples lead us to suggest a modification  (Conjecture \ref{conj:limcomp} in Section \ref{sec:pos}) of Gromov's compactness question. We shall prove the conjecture  for a class of $1$-tame manifolds, hence give a class of positive examples for which Gromov's compactness question  holds true.   

Our first main theorem of the paper is the following. 
\begin{theorem}[cf. Theorem \ref{thm:main}]\label{thm:main-intro}
	There exists a noncompact smooth spin manifold $X$ such that for any given compact subset $V\subset X$ and any $\rho>0$, there exists an incomplete Riemannian metric on $X$ with scalar curvature $\geq 1$ and the closed $\rho$-neighborhood $N_\rho(V)$ of $V$ in $X$ is compact, but $X$ itself does not admit a complete Riemannian metric with uniformly positive scalar curvature. 
\end{theorem}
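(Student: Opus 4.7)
The plan is to construct $X$ as an infinite proper connected sum $X = Y \mathbin{\#} W_1 \mathbin{\#} W_2 \mathbin{\#} \cdots$ along a sequence of points going to infinity in a background spin manifold $Y$ that admits a PSC metric, where the closed spin summands $W_i$ are chosen so that their secondary index invariants assemble into a nonzero class in the Milnor $\sideset{}{^1}\varprojlim$ term of an inverse system of coarse $K$-theoretic groups associated to an exhaustion of $X$. The topology of $X$ is thus engineered so that any compact portion is benign from the perspective of PSC, while the obstruction to a uniform complete PSC metric lives purely ``at infinity'' and is of secondary $\sideset{}{^1}\varprojlim$ type rather than visible on any single compact piece.

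For the positive direction (existence, for each $V$ and $\rho$, of an incomplete metric with $\mathrm{Sc}\geq 1$ and $N_\rho(V)$ compact), fix a compact $V\subset X$ and $\rho>0$ and choose a large compact $K\subset X$ containing $V$. On $K$ one builds a metric with $\mathrm{Sc}\geq 1$; the key point is that although a closed $W_i$ may not itself admit a PSC metric (its $\alpha$-invariant may be nonzero), the punctured piece $W_i\setminus D^n$ with boundary does admit a PSC metric matching a standard collar. At each puncture, attach a warped-product horn $dt^2 + f(t)^2 g_{\mathrm{round}}$ with spherical cross-section, choosing the profile $f$ to shrink to zero in finite parameter time; a standard warped-product scalar-curvature computation shows that $f$ can be arranged so that $\mathrm{Sc}\geq 1$ throughout the horn while the horn is geodesically incomplete at its tip. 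By placing all horns at metric distance greater than $\rho$ from $V$, the ball $N_\rho(V)$ lies inside the compact subset $K$, hence is compact.

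For the negative direction (no complete metric on $X$ with $\mathrm{Sc}\geq 1$), the plan is to use coarse/higher index theory. Fix an exhaustion $K_1\subset K_2\subset\cdots$ with $K_i$ chosen to contain exactly the first $i$ summands, and consider the inverse system of $K$-theories of the associated Roe algebras $C^*(K_i)$ together with a Milnor-type exact sequence of schematic form
\[
0 \longrightarrow \sideset{}{^1}\varprojlim\, K_{*+1}(C^*(K_i)) \longrightarrow K_*(C^*(X)) \longrightarrow \varprojlim K_*(C^*(K_i)) \longrightarrow 0.
\]
The choice of the $W_i$ is arranged so that their $\alpha$-invariants represent a nonzero class in the $\sideset{}{^1}\varprojlim$ term. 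A complete metric on $X$ with $\mathrm{Sc}\geq 1$ forces the Dirac operator to be invertible at infinity with a uniform spectral gap, so its coarse index at infinity vanishes compatibly across the whole exhaustion; tracing this vanishing through the Milnor sequence kills the $\sideset{}{^1}\varprojlim$ class, contradicting its non-triviality.

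The main obstacle, and the technical heart of the proof, is the negative direction: identifying the precise index-theoretic framework in which the $\sideset{}{^1}\varprojlim$ obstruction is simultaneously well defined, nonzero, and genuinely incompatible with a uniform lower bound $\mathrm{Sc}\geq 1$. The positive direction forces the invariant to be invisible to every individual compact subregion --- otherwise already there an obstruction would appear --- so it must be genuinely secondary, living in $\sideset{}{^1}\varprojlim$ rather than in the inverse limit itself. The delicate step is choosing the $W_i$ and the coarse $K$-theoretic setting precisely enough that this secondary class is nonzero and pairs nontrivially with the coarse Dirac operator of any hypothetical complete uniform PSC metric.
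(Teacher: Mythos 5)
Your high-level picture --- an obstruction of $\sideset{}{^1}\varprojlim$ type living purely at infinity, invisible on every compact piece --- matches the philosophy of the paper, but both halves of your argument have genuine gaps. For the negative direction, the Milnor sequence you write for $K_*(C^*(X))$ with compact $K_i$ does not exist in that form ($C^*(K_i)\cong\mathcal K$ for compact $K_i$, and $C^*(X)$ is not an inverse limit of these algebras), and, more seriously, you give no mechanism forcing the $\sideset{}{^1}\varprojlim$ term to be nonzero. Nontriviality of $\sideset{}{^1}\varprojlim$ requires the inverse system to fail Mittag--Leffler, and an infinite connected sum of closed summands $W_i$ along a fixed backbone does not produce this: the relevant inverse systems (of $KO_*(K_i,\partial K_i)$, or of the groups $KO_*(C^*_{\max}(\Gamma,G_i))$ that actually carry the obstruction) are governed by the fundamental groupoid at infinity, and one must engineer that groupoid to change \emph{non-surjectively on homology} as one goes to infinity. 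The paper does exactly this with the system $G_i=\mathbb Z^2\ast F_i$ and surjections inducing $\times 3$ on the $\mathbb Z^2$ factor (hence $\times 9$ on $H_2$), giving a copy of $\widehat{\mathbb Z}_{3}/\mathbb Z$ in $\sideset{}{^1}\varprojlim KO_3(Y_j,\partial Y_j)$; it then needs a realization theorem producing a spin manifold with precisely this pro-$\pi_1$ structure at infinity, and the isomorphism of the relative assembly map for $\mathbb Z^2\ast F$ to see that the class survives into the analytic receptacle $KO_*(\ualg(\cdot))$ where the vanishing theorem for complete uniformly PSC metrics applies. All of this is the content you defer as ``the technical heart''; without it there is no theorem.

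The positive direction also fails as written. If $\alpha(W_i)\neq 0$ then $W_i\setminus D^n$ admits no PSC metric with a round product collar at its boundary sphere (one could cap it off with a torpedo and obtain a PSC metric on the closed $W_i$), so the horns cannot be matched to the punctured summands. Moreover, an incomplete horn capping a puncture yields a metric on $X$ minus a discrete set of points, not on $X$: the statement requires a metric on all of $X$ with $\Sc\geq 1$ everywhere, including on every far-away summand and every neck. This is exactly why the paper must (i) arrange by surgery (Lemmas \ref{lm:surgery} and \ref{lm:surgery2}, Theorem \ref{thm:realization}) that each annulus between consecutive exhaustion pieces is obtained by attaching handles of index $\geq 2$; (ii) prove an extension result with mean-curvature control (Proposition \ref{prop:ext}, via Shi--Wang--Wei); and (iii) glue with Miao's lemma. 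It must also invoke the unstable relative Gromov--Lawson--Rosenberg conjecture for the groups $\mathbb Z^2\ast F_i$ to produce the PSC metric on each compact piece in the first place: your ``On $K$ one builds a metric with $\mathrm{Sc}\geq 1$'' is precisely the point at issue, since one needs the $\sideset{}{^1}\varprojlim$ class to restrict to zero in each $KO_n(C^*_{\max}(\Gamma,G_i))$ \emph{and} a converse to the index obstruction to convert that vanishing into an actual metric collared at the boundary.
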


Let us outline the key steps of our construction of negative examples. The idea behind  this is conceptually simple. In the de Rham cohomology of a noncompact manifold $X$, a closed form $\omega$ which is exact on every compact submanifold of $X$ is itself exact, but this fails integrally for cochains. In other words, one can make a choice of cochains $\nu_i$ on larger and larger parts of $X$ with $\omega = d\nu_i$, but there is a global constraint for them to be compatible. This is governed by (the $\sideset{}{^1}\varprojlim$ of) the system of cohomology (in one dimension lower) of the submanifolds,  which measures the indeterminacy of the forms $\nu_i$ showing that $\omega$ is exact. The analogous idea for the $K$-theory class associated to the Dirac operator on a spin manifold gives rise to the obstruction that we will use to construct our negative examples, where each positive scalar curvature metric on a compact submanifold of $X$ is an analogue of a cochain $\nu$ satisfying $d\nu = \omega$ on the submanifold above.  

Recall that  for any given  noncompact complete metric spaces $Y$ equipped with some suitable exhaustion,  Chang, Weinberger and Yu introduced in \cite[Section 3]{MR4045309}  the following new index map 
\[ \sigma\colon KO^{lf}_\ast(Y) \to KO_\ast(\ualg(Y)),  \]
where $KO^{lf}_\ast(Y)$ are the locally finite $KO$ groups of $Y$ and  $\ualg(Y)$ is some geometric $C^\ast$-algebra constructed using the given exhaustion on $Y$ (cf. Section \ref{sec:pre}). The $C^\ast$-algebra $\ualg(Y)$ encodes information about the changing nature of the fundamental group as one moves to infinity. A key feature of the index map $\sigma$ is  that it can be used to detect the geometry at infinity  of $Y$. In particular, as an application of their index map, Chang, Weinberger and Yu constructed a noncompact spin manifold $M$ equipped with an exhaustion\footnote{Here we say $\{M_i\}$ is an exhaustion of $M$ if  the $M_i$ are compact codimension zero submanifolds, $M_i\subset \interior{M}_{i+1}$ and $M = \cup_i M_i$. Here $\interior{M}_{i+1}$ is the interior of $M_{i+1}$.  } consisting of codimension zero compact submanifolds $(M_i, \partial M_i)$ with boundary such that each $M_i$ has a metric of positive scalar curvature which is collared at the boundary, but $M$ itself does not have a complete metric of uniformly positive scalar curvature \cite[Theorem 4.3]{MR4045309}. Those examples of Chang-Weinberger-Yu are not quite sufficient to serve as negative examples to Gromov's compactness question. We shall improve upon the methods of Change-Weinberger-Yu to construct a noncompact spin manifold $X$ equipped with an exhaustion consisting of codimension zero submanifolds $(X_i, \partial X_i)$ with boundary such that
\begin{enumerate}[$(1)$]
	\item each $X_i$ has a metric of positive scalar curvature which is collared at the boundary, 
	\item \emph{additionally} each annulus region $A_i$ between $\partial X_{i}$ and $\partial X_{i+1}$ can be obtained from $\partial X_i\times [0, 1]$ by attaching handles\footnote{This is equivalent to saying $\partial X_i$ can be  obtained from $\partial X_{i-1}$ via surgeries of codimension $\geq 2$.} of index $\geq 2$,  
	\item  but $X$ itself does not have a complete metric of uniformly positive scalar curvature. 
\end{enumerate}  
We claim that such an $X$ gives a negative answer to Gromov's compactness question.  Indeed, for any given compact subset $V\subset X$, we have $V\subset \interior{X}_i$ for some $i$, since $\{X_i\}$ is an exhaustion of $X$. Here  $\interior{X}_{i}$ is the interior of $X_{i}$.  By construction,  $(X_i, \partial X_i)$ admits a metric $g_i$ of  
scalar curvature $\geq 1$, which is collared at the boundary. For any $\rho>0$, by stretching\footnote{Since $g_i$ has product structure near the boundary, such a stretching does not change the scalar curvature of $g_i$.} the collar neighborhood of $\partial X_i$ if necessary, we can assume $N_\rho(V) \subset X_i$.   Now it only remains to show that for each metric $g_i$ on $X_i$ as above,  we can extend $g_i$  to an incomplete Riemannian metric on $X$ with scalar curvature $\geq 1$, which  is a consequence of the following proposition.
\begin{proposition}[{cf. Proposition  \ref{prop:ext}}]\label{prop:ext-intro}
	Let $Z$ be a cobordism between two closed smooth  manifolds $\partial_- Z$ and $\partial_+ Z$ such that $Z$  is obtained from $\partial_+ Z\times [0, 1]$ by attaching handles of index $\geq 2$. Given any smooth Riemannian metric $h$ on $\partial_- Z $, for any  $k>0$ and $m \in \mathbb R$, there exists a smooth Riemannian metric $g$ on $Z$ such that 
	\begin{enumerate}[$(1)$]
		\item $g$ extends $h$, that is, \[ g|_{\partial_- Z} = h,   \]
		\item the scalar curvature $g$ satisfies $\Sc(g)_z \geq k$ for all $z\in Z$,
		\item the mean curvature of $\partial_-Z$ satisfies $ H_{g}(\partial_- Z)_x\geq m$, 
		for all $x\in  \partial_-Z$.
	\end{enumerate}
\end{proposition}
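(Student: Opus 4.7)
My plan is to build $g$ in two stages: a warped-product collar near $\partial_- Z$ realizing the prescribed boundary data, followed by an extension across the rest of $Z$ via the handle structure. In Stage 1, on a small collar $\partial_- Z\times[0,T]\subset Z$ with $t=0$ corresponding to $\partial_- Z$, I take the warped-product metric
\[
g_0 = \phi(t)^2 h + dt^2,
\]
with $\phi\colon[0,T]\to(0,\infty)$ smooth and $\phi(0)=1$. Writing $n=\dim\partial_- Z$, the classical warped-product scalar curvature formula
\[
\Sc(g_0) = \phi^{-2}\Sc(h) - 2n\frac{\phi''}{\phi} - n(n-1)\Big(\frac{\phi'}{\phi}\Big)^2,
\]
together with the fact that the $t$-slices are umbilic with principal curvatures $\phi'/\phi$, gives full control of the boundary data: an appropriate choice of $\phi'(0)$ realizes mean curvature $\geq m$ at $\partial_- Z$, while taking $\phi''(0)$ sufficiently negative (depending on $k$, $m$, and $\max_{\partial_- Z}|\Sc(h)|$) forces $\Sc(g_0)|_{t=0}\geq k$. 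By continuity and compactness of $\partial_- Z$, this lower bound persists on the full collar for some $T>0$. I further arrange $\phi'\equiv 0$ for $t$ near $T$ so that $g_0$ is a product metric in a neighborhood of $\partial_- Z\times\{T\}$.

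In Stage 2, I treat the complement $Z':=Z\setminus(\partial_- Z\times[0,T/2))$, which is a cobordism diffeomorphic to $Z$, is still obtained from $\partial_+ Z\times[0,1]$ by attaching handles of index $\geq 2$, and already carries the product metric $\phi(T/2)^2 h + dt^2$ near its inner boundary. On $Z'$ I construct a metric with $\Sc\geq k$ extending this inner product metric. The construction proceeds handle by handle using the Gromov--Lawson psc surgery construction in its form with prescribed boundary data (in the spirit of Gajer and Walsh): each attached $k$-handle ($k\geq 2$) admits a psc metric of arbitrarily large magnitude on its core $D^k$, since $D^k$ with $k\geq 2$ carries arbitrarily positively curved round metrics, and these handle metrics can be inserted into a base metric on $\partial_+ Z\times[0,1]$ so that the resulting global metric on $Z'$ satisfies $\Sc\geq k$ and agrees with the prescribed product metric on the inner collar. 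Gluing $g_0$ and this bulk metric across the common product region along $\partial_- Z\times\{T/2\}$ then produces the desired smooth $g$ on $Z$ satisfying (1)--(3).

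The technical heart of the argument, and the main obstacle, is Stage 2: extending a nearly arbitrary boundary metric across the full cobordism while maintaining a uniform scalar curvature lower bound. The index-$\geq 2$ hypothesis on the handles is essential, since this is precisely the condition that makes each handle core ($\dim\geq 2$) support psc metrics of arbitrarily large magnitude; $0$- or $1$-handles would obstruct this step. The remaining subtlety is to match the prescribed inner-boundary metric exactly, rather than only up to path-connectedness in the space of psc metrics; this is handled by additional warped-product transitions near the final handle attachments, exploiting the fact that the inner-boundary metric is only a conformal rescaling of $h$ so that the matching can be absorbed into the warped-product degrees of freedom used throughout.
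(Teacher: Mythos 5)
Your Stage 1 breaks down at the point where you claim to flatten the warped product into a product metric near $\partial_-Z\times\{T\}$ while keeping $\Sc\geq k$. On any region where the metric is $c^2h+dt^2$ the scalar curvature is $c^{-2}\Sc(h)$, so no such product region with $\Sc\geq k>0$ exists unless $\Sc(h)>0$ everywhere --- but $h$ is an \emph{arbitrary} metric on $\partial_-Z$. (Quantitatively, in the ansatz $\phi(t)^2h+dt^2$ the bound $\Sc\geq k$ forces $-2n\phi''/\phi\geq k+n(n-1)(\phi'/\phi)^2-\phi^{-2}\Sc(h)$, so wherever $\Sc(h)\leq 0$ you must keep $\phi''<0$ and can never level off $\phi'$.) The same problem undermines your final matching step, since a constant conformal rescaling of an arbitrary $h$ is still arbitrary, so there is no product-type interface with $\Sc\geq k$ along which to glue Stage 1 to Stage 2. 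The local choices of $\phi'(0)$ and $\phi''(0)$ for a short collar are fine, but they do not produce usable gluing data. This is exactly the difficulty the paper's proof of Proposition \ref{prop:ext} sidesteps by invoking the Shi--Wang--Wei extension lemma (Lemma \ref{lm:sww}), which interpolates from $h$ to a strictly \emph{smaller} metric $h_1<h$ on a cylinder with $\Sc\geq k+1$ and prescribed mean curvature at the $h$-end, without any product structure or intrinsic positivity of the slices.

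Stage 2, which you yourself identify as the technical heart, is essentially asserted rather than proved, and the mechanism you describe would not work. You propose to insert positively curved handle metrics into ``a base metric on $\partial_+Z\times[0,1]$'' with $\Sc\geq k$; but no positive scalar curvature metric on $\partial_+Z$ is available (it need not admit one), the boundary data to be matched sits at the $\partial_-Z$ end and is not psc, and the Gajer/Walsh-type handle extension theorems you cite prescribe a \emph{psc} metric on one end and require the handles to be attached in codimension $\geq 3$ relative to that end; ``index $\geq 2$'' is not that condition, and the relevant positivity in all of these constructions comes from the normal spheres of codimension-$\geq 3$ surgeries, not from round metrics on the disk cores. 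The paper's argument uses the index-$\geq 2$ hypothesis quite differently: it doubles the cobordism, $W=Z\cup_{\partial_+Z}(-Z)$, and observes that $W$ is obtained from the cylinder $\partial_-Z\times I$ by \emph{interior} surgeries of codimension $\geq 3$; then the quantitative Gromov--Lawson/Schoen--Yau surgery construction, applied to the Shi--Wang--Wei metric on the cylinder, yields a metric on $W$ with $\Sc\geq k$ that is untouched near the boundary (so $g|_{\partial_-Z}=h$ and $H_g(\partial_-Z)\geq m$ survive), and one simply restricts this metric to $Z\subset W$, making no claim of control at $\partial_+Z$. Without the doubling trick (or an equivalent device) and with the product-collar gluing unavailable, your two stages do not assemble into a proof.
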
 
 Indeed, assume by induction we have extended $g_i$ to a Riemannian metric $ g_{i \shortrightarrow j}$ on $X_j$ with scalar curvature $\geq 1$. Since $\partial X_j$ is compact,  the mean curvature of $\partial X_j$ (with respect to $ g_{i \shortrightarrow j}$) is bounded below by some constant, say, $-m$.  By Proposition \ref{prop:ext-intro}, the metric  $h = g_{i\shortrightarrow j}|_{\partial X_j}$ extends to a metric $g_{j\shortrightarrow j+1}$ on the annulus $A_j$ with scalar curvature $\geq 1$ such that the mean curvature of $\partial X_j$ (with respect to $g_{j\shortrightarrow j+1}$) is $\geq m+1$. We glue the metrics $g_{i\shortrightarrow j}$ on $X_j$ and $g_{j\shortrightarrow j+1}$ on $A_j$ to obtain a \emph{continuous} metric on $X_{j+1} = X_j\cup_{\partial X_j} A_j$.  Now Miao's  gluing lemma \cite[Section 3]{MR1982695} implies that there exists a \emph{smooth} Riemannian metric $g_{i\shortrightarrow j+1}$ on $X_{j+1}$ with scalar curvature $\geq 1$ such that  $g_{i\shortrightarrow j+1}$ coincide with $g_{i\shortrightarrow j}\cup g_{j\shortrightarrow j+1}$ away from $\varepsilon$-neighborhood of $\partial X_j$ in $X_j\cup_{\partial X_j} A_j$. By repeating the above extension-and-gluing process inductively, we eventually extend $g_i$ on $X_i$ to an (incomplete) Riemannian metric on $X$ with scalar curvature $\geq 1$. This finishes the outline of our construction of negative examples to Gromov's compactness question. The full details will be given in Section \ref{sec:neg}.  
 
The above negative examples to Gromov's compactness question are based on the non-vanishing of certain $ \sideset{}{^1}\varprojlim$ index theoretic invariants that arise at infinity of the given underlying manifold. This naturally leads us to conjecture that Gromov's compactness question has a positive answer provided that these $ \sideset{}{^1}\varprojlim$ invariants vanish. The precise statement of this conjecture (Conjecture \ref{conj:limcomp}), a modification of Gromov's compactness question, will be given in Section \ref{sec:pos}. As supporting evidence, we prove the conjecture for a class of $1$-tame manifolds, hence give a class of positive examples for which Gromov's compactness question holds true.  In particular, our second main theorem of the paper is the following. 
 
 \begin{theorem}[cf. Theorem \ref{thm:pos}]
 	Let $M$ be a noncompact $1$-tame spin manifold of dimension $ n\geq 6$. Let $\Gamma = \pi_1(M) $ and $G = \pi_1^\infty(M)$. Assume that the unstable relative Gromov-Lawson-Rosenberg conjecture holds for the pair $(\Gamma, G)$.  Suppose for any given compact subset $V\subset M$ and any $\rho>0$, there exists an (incomplete) Riemannian metric on $M$ with scalar curvature $\geq 1$ such that the closed $\rho$-neighborhood $N_\rho(V)$ of $V$ in $M$ is compact. Then $M$ admits a complete Riemannian metric of uniformly positive scalar curvature. 
 \end{theorem}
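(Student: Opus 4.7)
The plan is to carry out an inductive construction along a carefully chosen exhaustion of $M$, showing that the combination of $1$-tameness with the unstable relative Gromov-Lawson-Rosenberg conjecture forces the $\sideset{}{^1}\varprojlim$ obstruction (the mechanism powering the negative examples) to vanish, so that the extension-and-gluing procedure from the outline of Theorem~\ref{thm:main-intro} can be run in the \emph{positive} direction to produce a complete metric with $\Sc \ge 1$.

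First, using $1$-tameness together with $n\ge 6$, I would fix an exhaustion $M_1\subset M_2\subset\cdots$ of $M$ by codimension-zero compact submanifolds with connected boundary such that every inclusion $\partial M_i \hookrightarrow M\setminus\interior{M}_i$ induces an isomorphism of fundamental groups onto $G=\pi_1^\infty(M)$, and such that each annular region $A_i=\overline{M_{i+1}\setminus M_i}$ admits a handle decomposition from $\partial M_i\times[0,1]$ using only handles of index $k$ with $2\le k\le n-2$. The upper bound lets Proposition~\ref{prop:ext-intro} be applied from either boundary component of $A_i$ (since the dual decomposition then also uses handles of index in $[2,n-2]$), and the stable range $n\ge 6$ makes this achievable by handle trading. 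Next, for each $i$ I would choose $\rho_i$ so that $N_{\rho_i}(M_i)\supset M_{i+1}$ and apply the hypothesis to produce an incomplete metric $h_i$ on $M$ with $\Sc\ge 1$ on a compact neighborhood of $M_{i+1}$. The restrictions $\alpha_i=h_i|_{\partial M_i}$ are then psc metrics that are psc-bordant to $\alpha_{i+1}$ through $A_i$, although the bordisms produced this way need not fit together into a single coherent sequence.

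The heart of the argument is to upgrade these psc bordisms to psc concordances so that the $\alpha_i$ organize into a compatible inverse system. In general the $\sideset{}{^1}\varprojlim$ of the inverse system of psc concordance classes on the $\partial M_i$ can be nontrivial, and this is exactly the obstruction exploited by the negative examples of Section~\ref{sec:neg}. Here, however, $1$-tameness forces the terms of the system to be controlled by a fixed $KO$-theoretic target built from $G$ (rather than a genuine limit over varying groups at infinity); the unstable relative Gromov-Lawson-Rosenberg conjecture for $(\Gamma,G)$ then provides the completeness of the index invariant for psc concordance relative to the ambient $M$. Combining these two inputs upgrades the psc bordisms through $A_i$ to psc concordances between $\alpha_i$ and $\alpha_{i+1}$ (after modification within $A_i$), which renders the inverse system Mittag-Leffler and makes its $\sideset{}{^1}\varprojlim$ vanish.

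With compatibility secured, the complete metric is built by induction. Starting from $g_1=h_1|_{M_1}$, assume a smooth metric with $\Sc\ge 1$ has been constructed on $M_i$ whose boundary value is psc-concordant in $A_i$ to the chosen representative of $\alpha_i$. Using this concordance together with Proposition~\ref{prop:ext-intro} applied to the handle decomposition of $A_i$, and Miao's gluing lemma to smooth out the resulting $C^0$ match, extend across $A_i$ to a smooth psc metric on $M_{i+1}$ while arranging the mean curvature of $\partial M_{i+1}$ to be as large as the next step requires. Letting the collar lengths of the $\partial M_i$ grow to infinity guarantees completeness of the limit metric on $M$. The main obstacle is the compatibility step: translating $1$-tameness and the index-theoretic hypothesis into an honest vanishing of $\sideset{}{^1}\varprojlim$ of the psc concordance system. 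This is the index-theoretic heart of the proof, and it is precisely the phenomenon that the negative examples of the paper exploit.
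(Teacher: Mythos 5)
Your proposal has the right overall shape (an exhaustion adapted to $1$-tameness, an index-theoretic input, then an inductive extension across annuli), but it leaves the decisive step unproved and routes the extension through the wrong machinery. The step you yourself flag as ``the main obstacle'' --- upgrading the psc bordisms through the $A_i$ to psc concordances and proving the resulting inverse system is Mittag-Leffler so that its $\sideset{}{^1}\varprojlim$ vanishes --- is exactly the content a proof must supply, and you give no argument for it. The paper in fact avoids this discussion entirely: it applies the hypothesis \emph{once}, to $V=K_1$ with $\rho$ sufficiently large, and invokes the quantitative vanishing theorem (Theorem \ref{thm:quanvanish}) to conclude that the relative higher index $\ind_{\Gamma,G}(D_{K_1})$ vanishes in $KO_n(C^\ast_{\max}(\Gamma,G))$; the unstable relative Gromov--Lawson--Rosenberg conjecture for $(\Gamma,G)$ (applicable because, by Siebenmann, $1$-tameness lets one arrange $\pi_1(\partial K_1)\to\pi_1(M-\interior{K}_1)$ to be an isomorphism) then yields a single psc metric on $K_1$ collared at the boundary. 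No inverse system of boundary metrics, no concordances, and no $\sideset{}{^1}\varprojlim$ analysis are needed in the positive direction.

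Second, your extension step would fail as written. You propose handle decompositions of index in $[2,n-2]$ and extension via Proposition \ref{prop:ext} plus Miao's gluing lemma (Lemma \ref{lm:glue}) --- that is the machinery of the \emph{negative} examples. Proposition \ref{prop:ext} gives no control of the metric, the mean curvature, or the collar structure at the outgoing boundary $\Sigma_2$ (see the remark following it), which is precisely why that construction only produces \emph{incomplete} extensions; your claim that ``letting the collar lengths grow to infinity guarantees completeness'' has nothing to attach to, since the extended metric need not be collared at $\partial M_{i+1}$, and the scalar curvature bound cannot be propagated to the next stage in a controlled way. The correct mechanism is that $1$-tameness gives $\pi_1$-isomorphisms from \emph{both} ends of each annulus $A_i$, so Lemma \ref{lm:surgery} upgrades the decomposition to handles of index $\geq 3$, i.e.\ codimension $\geq 3$ surgeries; the Gromov--Lawson/Schoen--Yau surgery theorem (with Gajer's improvement) then extends a collared psc metric on $K_i$ to a collared psc metric on $K_{i+1}$ with scalar curvature $\geq 2-\sum_{\beta=1}^{i}2^{-\beta}$, and completeness is arranged by stretching the product collars so that $\mathrm{dist}(\partial K_i,\partial K_{i+1})\to\infty$.
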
 
Here $\pi_1^\infty(M)$ is the fundamental groupoid at infinity of $M$, cf. Definition \ref{def:tame} and the discussion after. The unstable relative Gromov-Lawson-Rosenberg conjecture will be reviewed in Section \ref{sec:pre}, cf. Conjecture \ref{conj:rGLR}. 

The paper is organized as follows. In Section \ref{sec:pre}, we review the construction of some geometric $C^\ast$-algebras, the unstable relative Gromov-Lawson-Ronsenberg conjecture for positive scalar curvature, and the construction of relative higher index. In Section \ref{sec:neg}, we construct negative examples to Gromov's compactness question. In Section \ref{sec:pos}, we suggest a modification of Gromov's compactness question, by imposing an extra vanishing condition on certain $ \sideset{}{^1}\varprojlim$ invariants. We confirm the conjecture for   a class of $1$-tame manifolds, which in particular gives a class of positive examples for which Gromov's compactness question holds true.
 
\vspace{.5cm}
\textbf{Acknowledgments.} We would like to thank Bernhard Hanke for helpful comments.

 \section{Preliminaries}\label{sec:pre}
 
 In this section, we review the construction from \cite{MR4045309}  of some geometric $C^\ast$-algebras associated with exhaustions of noncompact spaces. The $K$-theory groups of these $C^\ast$-algebras are the receptacle of index theoretic invariants that  detect the geometry at infinity of the underlying spaces, which constitute a key ingredient in our construction of counterexamples to Gromov's compactness question.

 Let  $\psi \colon A\to B$ be a homomorphism between two (real) $C^\ast$-algebras $A$ and $B$. The mapping cone $C^\ast$-algebra $C_\psi$ of $\psi$ is given by
 \[ C_\psi\coloneqq \{ (a, f) \mid a\in A, f\in C_0([0, 1), B)  \textup{ and } f(0) = \psi(a)\}. \]
It follows that we have the following short exact sequence of $C^\ast$-algebras: 
\[  0 \to  \sus{B} \to C_\psi \to A \to 0, \]
where $\sus{B} = C_0((0, 1), B)$ is the suspension $C^\ast$-algebra of $B$.

\begin{definition}
	A homomorphism  $\varphi\colon G\to \Gamma$ between two discrete groups induces a homomorphism of real $C^\ast$-algebras $\varphi_\ast \colon C^\ast_{\max}(G) \to C^\ast_{\max}(\Gamma).$
	We define  $C^\ast_{\max}(\Gamma, G)$ to be  the $7$th suspension $\sus^7C_{\varphi_\ast} \cong C_0(\mathbb R^7)\otimes C_{\varphi_\ast} $ of  the mapping cone $C^\ast$-algebra  $C_{\varphi_\ast}$. 
\end{definition}

 \begin{definition}\label{def:exhaust}
 	Let $(Y, d)$ be a noncompact, complete metric space. Suppose $Y_1\subseteq Y_2 \subseteq Y_3\subseteq \cdots $ is a sequence of connected compact subsets of $Y$. We say $\{Y_i\}$ is an admissible exhaustion if the following are satisfied:
 	\begin{enumerate}[$(1)$]
 		\item $Y = \cup_{i=1}^\infty Y_i$;
 		\item for all $j>i$, the subspace $Y_{i, j} = Y_j - \interior{Y}_i$ is connected, where $\interior{Y}_i$ is the interior of $Y_i$; 
 		\item $d(\partial Y_i, \partial Y_j) \to \infty$, as $|j-i|\to \infty$, where $\partial Y_i = Y_i - \interior{Y}_i$.  	\end{enumerate}
 \end{definition}

Now suppose $\{Y_i; Y_{ij}\}$ is an admissible exhaustion of $Y$ as above. Define $D_i^\ast$ to be the $C^\ast$-algebra inductive limit 
\begin{equation}\label{eq:d-alg}
	D_i^\ast \coloneqq \varinjlim_{j>i} C^\ast_{\max}(\pi_1(Y_j), \pi_1(Y_{ij})) \otimes \mathcal K 
\end{equation}
of the directed system 
\[ \cdots \to  C^\ast_{\max}(\pi_1(Y_j), \pi_1(Y_{i, j})) \otimes \mathcal K \xrightarrow{\iota_j} C^\ast_{\max}(\pi_1(Y_{j+1}), \pi_1(Y_{i,j+1})) \otimes \mathcal K \to \cdots   \]
where the homomorphism $\iota_j$ is induced by the inclusion $(Y_j, Y_{i,j}) \hookrightarrow (Y_{j+1}, Y_{i, j+1})$ and $\mathcal K $ is real $C^\ast$-algebra of compact operators on a real Hilbert space. 

Let 
\[  \prod_{i=1}^\infty D_i^\ast = \big\{ (a_1, a_2, \cdots) \mid a_i\in D_i^\ast \textup{ and } \sup_{i} \|a_i\| < \infty\big\}\]
be the $C^\ast$ product algebra of $D_i^\ast$.  There is a natural homomorphism 
\[ \rho_{i+1}\colon D_{i+1}^\ast \to D_i^\ast \] induced by the inclusions of the spaces 
\[   (Y_j, Y_{i+1, j}) \hookrightarrow (Y_{j}, Y_{i, j}) \]
for $j > i+1$. Let $\rho \colon \prod_{i=1}^\infty D_i^\ast \to \prod_{i=1}^\infty D_i^\ast$ be the homomorphism that maps $(a_1, a_2, \cdots)$ to $(\rho_2(a_2), \rho_3(a_3), \cdots)$. 

\begin{definition}\label{def:ualg}
	With the above notation, we define the $C^\ast$-algebra $\ualg(Y)$ by 
	\[ \ualg(Y)\coloneqq \Big\{ a \in C([0, 1], \prod_{i=1}^\infty D_i^\ast) \mid \rho(a(0)) = a(1)\Big\}  \]
\end{definition} 

Consider the inverse system 
\[  KO_n(D_1^\ast)  \xleftarrow{\ (\rho_2)_\ast \ }  KO_n(D_2^\ast) \xleftarrow{\ (\rho_3)_\ast \ } \cdots     \]
where $(\rho_{i+1})_\ast \colon  KO_n(D_{i+1}^\ast) \to  KO_n(D_i^\ast)$ is the map induced by $\rho_{i+1}\colon D_{i+1}^\ast \to D_i^\ast$. If $\Phi \colon \prod_{i=1}^\infty  KO_n(D_i^\ast) \to \prod_{i=1}^\infty  KO_n(D_i^\ast)$ is defined by 
\[ \Phi(a_i) = (a_i - (\rho_{i+1})_\ast(a_{i+1})),  \]
then by definition the inverse limit $\varprojlim KO_n(D_i^\ast)$  of the above inverse system is simply the kernel $\ker(\Phi) $  of $\Phi$. We have the following Milnor exact sequence (cf.  \cite{Guentner-Yu}): 
\begin{equation}\label{eq:milnor}
0 \to \sideset{}{^1}\varprojlim KO_{n+1}(D_i^\ast) \to KO_n (\ualg(Y)) \to  \varprojlim KO_n(D_i^\ast) \to 0.
\end{equation} 
where by definition $\sideset{}{^1}\varprojlim KO_n(D_i^\ast)$ is  the cokernel  $ \mathrm{coker}(\Phi)  $ of $\Phi$.  
This Milnor exact sequence can be derived from the $KO$-theory long exact sequence associated with the following short exact sequence of $C^\ast$-algebras: 
\[   0 \to \sus{(\prod_{i=1}^\infty D_i^\ast)}\to \ualg(Y) \xrightarrow{\ \varpi\ } \prod_{i=1}^\infty D_i^\ast\to 0 \]
together with the fact that 
\[  KO_n\big(\prod_{i=1}^\infty D_i^\ast\big)  \cong \prod_{i=1}^\infty KO_n(D_i^\ast)\]
when $D_i^\ast$ are stable, that is, $D_i^\ast \cong D_i^\ast\otimes \mathcal K$. Here $\varpi\colon \ualg(Y) \to  \prod_{i=1}^\infty D_i^\ast$ is the evaluation map $\varpi(a)  \coloneqq  a(0)$. 

In \cite[Section 3]{MR4045309}, Chang, Weinberger and Yu defined a natural index map  
\begin{equation}\label{eq:indexmap}
	\sigma\colon KO^{lf}_\ast(Y)  \to KO_\ast(\ualg(Y)) 
\end{equation} 
which can be used to detect the geometry at infinity of $Y$.  As an application of their index map, they constructed a noncompact spin manifold $M$ equipped with an exhaustion consisting of codimension zero submanifolds $(M_i, \partial M_i)$ with boundary such that each $M_i$ has a metric of positive scalar curvature which is collared at the boundary, but $M$ itself does not have a complete metric of uniformly positive scalar curvature \cite[Theorem 4.3]{MR4045309}.  We shall review the construction of this index map $\sigma$ in Section \ref{sec:relative}.

Note that the Milnor exact sequence \eqref{eq:milnor} gives rise to the following commutative diagram: 
\[  \begin{tikzcd}[column sep=1.5em]
	0 \arrow[r] &  \sideset{}{^1}\varprojlim KO_{n+1}(Y_i, \partial Y_i)  \arrow[r] \arrow[d] &  KO_n^{lf} (Y)  \arrow[r] \arrow[d]& \varprojlim KO_n(Y_i, \partial Y_i) \arrow[r]  \arrow[d] &  0 \\
	0 \arrow[r] &  \sideset{}{^1}\varprojlim KO_{n+1}(D_i^\ast) \arrow[r] &  KO_n (\ualg(Y))  \arrow[r]& \varprojlim KO_n(D_i^\ast) \arrow[r]  &  0 
\end{tikzcd} \]

Now let us briefly recall the following unstable relative Gromov-Lawson-Rosenberg conjecture, cf. \cite[Conjecture 2.21]{MR4045309}. 
\begin{conjecture}[Unstable relative Gromov-Lawson-Rosenberg conjecture]\label{conj:rGLR} Let $(N,   \partial N)$ be an $n$-dimensional compact spin manifold with boundary. If the relative higher index of the Dirac operator $D$ of $N$ is zero in $KO_n(\pi_1(N), \pi_1(\partial N))$, then there is a metric of positive scalar curvature on $N$ that is collared near $\partial N$.
\end{conjecture}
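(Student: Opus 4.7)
The plan is to follow the standard three-step template for results of Gromov-Lawson-Rosenberg type, now adapted to manifolds with boundary: (i) interpret the vanishing of the relative higher index as an algebraic triviality in a relative spin bordism group; (ii) use this to produce a relative spin cobordism from $(N,\partial N)$ to a reference pair that visibly carries a collared PSC metric; (iii) propagate the metric back across the cobordism by relative surgery, keeping the collar rigid at the boundary.

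Algebraically, one introduces the relative assembly map
\begin{equation*}
\alpha\colon \Omega^{spin}_n\bigl(B\pi_1(N),B\pi_1(\partial N)\bigr) \longrightarrow KO_n\bigl(\pi_1(N),\pi_1(\partial N)\bigr)
\end{equation*}
through which the relative higher index of $D$ factors, and tries to identify $\ker\alpha$, modulo classes already represented by pairs admitting a collared PSC metric, with a bordism group of relative $\mathbb{HP}^2$-bundles --- the direct relative analogue of Stolz's unstable theorem in the closed case. Granted this identification, the hypothesis $[N,\partial N]\in\ker\alpha$ produces a concrete reference model $(N_0,\partial N_0)$ with a collared PSC metric, together with a relative spin cobordism $(W;N,N_0)$ in which $\partial N$ is crossed trivially. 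For $n$ sufficiently large, handle trading on the interior of $W$ reduces it to a cobordism built only from handles of index in $[3,n-2]$, which is the range in which PSC surgery works.

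Geometrically one then applies the Gromov-Lawson surgery theorem, or more precisely its parametrised refinement due to Chernysh and Walsh, to extend the collared PSC metric on $N_0$ across each of these handles while leaving a neighborhood of $\partial N$ untouched. The main obstacle, and the reason this conjecture is still open in the stated generality, lies in simultaneously executing surgery and preserving the collar of $\partial N$: Gromov-Lawson only controls the metric inside a tube around the surgery sphere, and interpolating between this tube and a fixed collar tends to destroy positivity of scalar curvature. A Miao-type gluing lemma can repair smoothness at the seam but not positivity. The substantive task is therefore a boundary-rigid surgery theorem for PSC metrics with fixed collar; combined with the relative Stolz computation of $\ker\alpha$, this would close the argument, but each of the two ingredients appears genuinely open in the generality stated, which is precisely why Conjecture~\ref{conj:rGLR} must be assumed rather than proved.
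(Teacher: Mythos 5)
The statement you were asked about is a \emph{conjecture}, and the paper offers no proof of it; on the contrary, it explicitly records that the statement ``cannot be true as stated'' in general, citing Schick's counterexample to the unstable Gromov--Lawson--Rosenberg conjecture and the examples of Dwyer--Schick--Stolz. Since a closed manifold is the special case $\partial N=\emptyset$, those counterexamples already refute the relative statement in full generality. You correctly sense that the argument cannot be completed and honestly flag that the key ingredients are missing, which is the right instinct; but your characterization of the status as ``still open in the stated generality'' is wrong --- the statement is known to be \emph{false} in general, not open. Moreover, the strategy you sketch (identify $\ker\alpha$, modulo pairs with collared PSC metrics, with a bordism group of $\mathbb{HP}^2$-bundles, then propagate the metric by surgery) is precisely the \emph{stable} Stolz mechanism, and the Schick examples show that this identification fails unstably even for closed manifolds: the index obstruction can vanish while a minimal-hypersurface (toral) obstruction still forbids PSC. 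No amount of boundary-rigid surgery technology would repair this, so the gap is not merely technical.

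What the paper actually does is quite different: it never attempts a general proof, but isolates special classes of fundamental group pairs $(\pi_1(N),\pi_1(\partial N))$ for which the conjecture is a theorem, and only those cases are used later. Concretely: the $\pi$--$\pi$ case (where $\pi_1(\partial N)\to\pi_1(N)$ is an isomorphism and $\dim N\geq 6$) follows from Gromov--Lawson and Schoen--Yau surgery as improved by Gajer, and the case $\pi_1(N)=\{e\}$, $\pi_1(\partial N)=\mathbb{Z}^k\ast F$ follows from the known validity of the unstable GLR conjecture for such free products (Rosenberg--Stolz, Joachim--Schick) together with \cite[Theorem 2.23]{MR4045309}. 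If you want to salvage your write-up, replace the general reduction by a statement of these verified cases and a correct attribution of the counterexamples; as written, your text asserts a research program for a statement the literature has already disproved.
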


Because of the failure of the ordinary unstable Gromov–Lawson-Rosenberg conjecture (\cite[Example 2.2]{MR1632971}), in general,  this statement cannot be true as stated. On the other hand, there are special cases where the unstable (resp. relative)  Gromov-Lawson-Rosenberg conjecture is true. For example, the $\pi-\pi$ case of the unstable relative Gromov-Lawson-Rosenberg conjecture is true for all manifolds $(N, \partial N)$ of dimension $\geq 6$, that is,  the unstable relative Gromov-Lawson-Rosenberg conjecture holds  if $\pi_1(\partial N) \to \pi_1(N)$ is an isomorphism. This follows from the surgery theory for positive scalar curvature of Gromov-Lawson  \cite[Theorem A]{MGBL80b} and Schoen-Yau \cite[Corollary 6]{RSSY79b}, as improved by Gajer \cite{MR962295}. Recall that the unstable Gromov-Lawson-Rosenberg conjecture holds  when  $\pi  = \mathbb Z^k \ast F$ the free product of $\mathbb Z^k$ and $F$, where $\mathbb Z^k$ is the free abelian group of rank $k$ and $F$ is a finitely generated free group (cf. \cite[Corollary 3.8]{MR1321004}). In particular, the unstable relative Gromov-Lawson-Rosenberg conjecture holds if $\pi_1(N) = \{e\}$ is the trivial group and $\pi_1(\partial N) = \mathbb Z^k\ast F$ with $F$ a finitely generated free group (see \cite[Theorem 2.23 \& Corollary 2.24]{MR4045309} for more details).  In our construction of negative examples to Gromov's compactness question,  we shall exploit these special groups for which the unstable relative Gromov-Lawson-Rosenberg conjecture holds.  

\subsection{Relative higher index}\label{sec:relative} At the end of this section, let us review the construction of relative higher index for Dirac operators on spin manifolds with boundary and more generally  relative higher index for complete spin manifolds (relative to complements of compact subsets), cf. \cite[Section 2]{MR4045309}. We will also review  the construction of this index map $\sigma$ from line \eqref{eq:indexmap}.

Let us first briefly recall the definition of some geometric $C^\ast$-algebras. For simplicity, let us assume $X$ is a complete Riemannian manifold and $\mathcal S$ is a Hermitian bundle over $X$. Let $H_X$ be the space  $L^2(X, \mathcal S)$ of $L^2$ sections of $\mathcal S$ over $X$.

\begin{definition}
	Let $T$ be a bounded linear operator acting on $H_X$. 
	\begin{enumerate}[(i)]
		\item The propagation of $T$ is defined to be the nonnegative real number 
		\[ \sup\{ d(x, y)\mid (x, y)\in \supp(T)\},\] where $\supp(T)$ is  the complement (in $X\times X$) of the set of points $(x, y)\in X\times X$ for which there exist $f, g\in C_0(X)$ such that $gTf= 0$ and $f(x)\neq 0$, $g(y) \neq 0$. Here $C_0(X)$ is the algebra of all continuous functions on $X$ which vanish at infinity. 
		\item $T$ is said to be locally compact if $fT$ and $Tf$ are compact for all $f\in C_0(X)$.  
	\end{enumerate}
\end{definition}

\begin{definition}\label{def:localg}
	With the same notation as above, let $\mathcal B(H_X)$ be the algebra of all bounded linear operators on $H_X$.  
	\begin{enumerate}[(i)]
		\item The Roe algebra of $X$, denoted by $C^\ast(X)$, is the $C^\ast$-algebra generated by all locally compact operators in $\mathcal B(H_X)$ with finite propagation.
		\item If $Y$ is a subspace of $X$, then the $C^\ast$-algebra $C^\ast(Y; X)$  is defined to be the closed subalgebra of $C^\ast(X)$  generated by all elements $T$ such that  $\supp(T)$ is within finite distance of $Y\times Y$. 
	\end{enumerate}
\end{definition}

Now suppose $\widetilde X$ is a Galois covering space of  $X$. Denote its deck transformation group by $\Gamma$. Lift the Riemannian metric of $X$  to a Riemannian metric on $\widetilde X$ so that the action of $\Gamma$ on $\widetilde X$ is an isometric action. Also lift the Hermitian bundle $\mathcal S$ over $X$ to a Hermitian bundle $\widetilde {\mathcal S}$ over $\widetilde X$. 

\begin{definition}\label{def:equiroe}
With the above notation,	let $H_{\widetilde X} = L^2(\widetilde X, \widetilde{\mathcal S})$. Denote by $\mathbb C[X]^\Gamma$ the $\ast$-algebra of all $\Gamma$-equivariant locally compact operators of finite propagation in $\mathcal B(H_{\widetilde X})$.  
	\begin{enumerate}
		\item 	We define the $\Gamma$-equivariant Roe algebra $C^\ast(\widetilde X)^\Gamma$ to be the closure of $\mathbb C[X]^\Gamma$ in $\mathcal B(H_{\widetilde X})$. 
		\item If $Y$ is a subspace of $X$, then the $C^\ast$-algebra $C^\ast(\widetilde Y; \widetilde X)^\Gamma$  is defined to be the closed subalgebra of $C^\ast(\widetilde X)^\Gamma$  generated by all elements $T$ in $C^\ast(\widetilde X)^\Gamma$ such that  $\supp(T)$ is within finite distance of $\widetilde Y\times \widetilde Y$, where $\widetilde Y$ is the restriction of the covering space $\widetilde X$ on $Y\subset X$.
		\item We define the maximal $\Gamma$-equivariant Roe algebra $C^\ast_{\max}(\widetilde X)^\Gamma$ to be the completion of $\mathbb C[X]^\Gamma$ under the maximal norm:
	\[  \|a\|_{\max} = \sup_{\phi} \ \big\{\|\phi(a)\| : \textup{ all $\ast$-representations } \phi\colon \mathbb C[X]^\Gamma \to \mathcal B(H') \big\}. \]
For  a subspace $Y$ of $X$,  the maximal version $C^\ast_{\max}(\widetilde Y; \widetilde X)^\Gamma$ is defined similarly. 
	\end{enumerate}
  
\end{definition}

Now let us review the construction of relative higher index. If $(N, \partial N)$ is a compact spin manifold with boundary, then we shall attach an infinite cylinder $[0, \infty) \times \partial N$ to $N$ along $\partial N$, and extend the Riemannian metric of $N$ to a complete Riemannian metric on the resulting manifold. The relative higher index of the Dirac operator on $(N, \partial N)$ will in fact be constructed using this complete manifold (relative to the cylindrical end). So for brevity, let us now assume $X$ is a complete Riemannian spin manifold and $K$ is codimension zero compact submanifold (with boundary) of $X$. Let us denote $\Gamma = \pi_1(X)$ and $G = \pi_1(X-K)$. Here if $X-K$ has more than one connected components, then $\pi_1(X-K)$ should mean the fundamental groupoid of $X-K$, that is, $\pi_1(X-K)$ is the disjoint union  $\coprod_{\alpha=1}^\ell \pi_1(Y_\alpha)$, where $Y_\alpha$ are the components of $X-\interior{K}$. In this case, the maximal group $C^\ast$-algebra of $G = \pi_1(X-K)$ is defined to be 
\[   C_{\max}^\ast(G) = \bigoplus_{\alpha=1}^\ell C_{\max}^\ast(\pi_1(Y_\alpha)),  \]
Let $\iota_\alpha\colon G_\alpha \to \Gamma$ be the group homomorphism induced by the inclusion of spaces $Y_\alpha \hookrightarrow X$. Let $C_{\iota_\ast}$ be the mapping cone $C^\ast$-algebra induced by the homomorphism 
\begin{equation}\label{eq:cone}
\iota_\ast\colon C_{\max}^\ast(G) \otimes \mathcal K\to M_\ell(\mathbb C) \otimes C_{\max}^\ast(\Gamma) \otimes \mathcal K
\end{equation}
given by 
\[  \iota_\ast(a_1\oplus \cdots \oplus a_\ell) =  \begin{psmallmatrix}
(\iota_{1})_\ast a_1 &  & \\
& \ddots & \\ 
& & (\iota_{\ell})_\ast a_\ell
\end{psmallmatrix}, \]
where $M_\ell(\mathbb C)$ is the  algebra of $(\ell\times \ell)$ matrices.    We denote by $C_{\max}^\ast(\Gamma, G)$  the $7$th suspension $\sus^7C_{\iota_\ast} \cong C_0(\mathbb R^7)\otimes C_{\iota_\ast} $ of  the mapping cone $C^\ast$-algebra  $C_{\iota_\ast}$. 
  
In the following, we shall review the construction of the relative higher index of the Dirac operator on $X$ (relative to $K$). For simplicity, we assume 
\[ \dim X \equiv 0 \pmod 8, \] while the other dimensions are completely similar by a standard suspension argument.  Let $\widetilde X$ be the universal covering space of $X$ and $\widetilde D$ the associated Dirac operator on $\widetilde X$. Let $\widetilde Y_\alpha$ be the universal covering space of $Y_\alpha$ whose deck transformation group is $G_\alpha$.

   Choose a normalizing function $f$, i.e.  a continuous odd function $f\colon \mathbb R\to \mathbb R$ such that 
\begin{equation}\label{eq:normalizing}
	\lim_{t\to \pm \infty} f(t) = \pm 1.
\end{equation} 
Throughout this section, assume without loss of generality that we have chosen the normalizing function $f$ so that its distributional Fourier transform has compact support. 
Let  $F= f(\widetilde D)$ be the operator obtained by applying functional calculus to $\widetilde D$. 
Since we are in  the even dimensional case,  $\widetilde D$ has odd-degree with respect to the natural $\mathbb Z/2$-grading on the spinor bundle of $\widetilde X$, that is, 
\[ \widetilde D = \begin{pmatrix}
	0 & \widetilde D^-\\
	\widetilde D^+ & 0 
\end{pmatrix} \]
In particular, it follows that   
\[  F = \begin{pmatrix} 
	0 & U \\
	V& 0
 \end{pmatrix}.  \]
for some operators $U$ and $V$.

We define the following invertible element 
\[ W \coloneqq \begin{pmatrix} 1 & U \\ 0 & 1\end{pmatrix} \begin{pmatrix} 1 & 0 \\  V & 1\end{pmatrix} \begin{pmatrix} 1 &  U\\ 0 & 1 \end{pmatrix}\begin{pmatrix} 0 & -1\\ 1 & 0 \end{pmatrix}. \]
and  form the idempotent 
\begin{equation}\label{eq:index}
	p = W \begin{pmatrix} 1 & 0 \\ 0 & 0\end{pmatrix} W^{-1}. 
	= \begin{pmatrix} UV(2-UV) & (2 - UV)(1-UV) U \\ V(1-UV) & (1-VU)^2\end{pmatrix}.
\end{equation}
Let $\chi$ be the characteristic function on $K$ and denote its lift to $\widetilde X$ by $\widetilde \chi$.  Let $u$ be an invertible element in the matrix algebra of $C_0(\mathbb R^7)^+$ representing a generator of $KO_{1}(C_0(\mathbb R^7)) \cong KO_0(C_0(\mathbb R^8)) = \mathbb Z$. Consider the invertible elements 
\[  \mathcal U  =  u\otimes p + 1\otimes (1-p) \textup{ and }  \mathcal V  =  u^{-1} \otimes p + 1\otimes (1-p)\] 
in $(C_0(\mathbb R^7)\otimes C^\ast_{\max}(\widetilde X)^\Gamma)^+$, where $C^\ast_{\max}(\widetilde X)^\Gamma$ is the $\Gamma$-equivariant maximal Roe algebra of $\widetilde X$ and $(C_0(\mathbb R^7)\otimes C^\ast_{\max}(\widetilde X)^\Gamma)^+$ is the unitization of $C_0(\mathbb R^7)\otimes C^\ast_{\max}(\widetilde X)^\Gamma$. 
For each $s\in [0, 1]$, we define the following invertible element 
\begin{equation}\label{eq:inv}
\mathcal W_s \coloneqq \begin{pmatrix} 1 & (1-s)\widetilde \chi \mathcal U \widetilde \chi \\ 0 & 1\end{pmatrix} \begin{pmatrix} 1 & 0 \\ -(1-s)\widetilde \chi \mathcal V \widetilde \chi& 1\end{pmatrix} \begin{pmatrix} 1 & (1-s)\widetilde \chi \mathcal U\widetilde \chi   \\ 0 & 1 \end{pmatrix}\begin{pmatrix} 0 & -1\\ 1 & 0 \end{pmatrix}.
\end{equation}
and  form the idempotent 
\begin{equation}\label{eq:idempath}
	\mathfrak p_s = \mathcal W_s \begin{pmatrix} 1 & 0 \\ 0 & 0\end{pmatrix} \mathcal W_s^{-1}. 
\end{equation}
By construction, each element $\mathfrak p_s$ lies in $(C_0(\mathbb R^7)\otimes C^\ast_{\max}(\widetilde K; \widetilde X)^\Gamma)^+$. In particular, we have  
\[ KO_i( C^\ast_{\max}(\widetilde K; \widetilde X)^\Gamma) \cong KO_i(C^\ast_{\max}(\widetilde K)^\Gamma) \cong  KO_i(C^\ast_{\max}(\Gamma)). \] 

Now let $Z_\alpha = Y_\alpha\cup_{\partial Y_\alpha} (\partial Y_\alpha \times [0, \infty))$ be the manifold obtained from $Y_\alpha$ by attaching an infinite cylinder. We equip $Z_\alpha$ with  a complete Riemannian metric  that agrees with the Riemannian metric of $X$ in a small neighborhood of $Y_\alpha$.  Let $\widetilde Z _\alpha$ be the universal covering space of $Z_\alpha$.  Note that $\pi_1(Z_\alpha) = \pi_1(Y_\alpha)$. Denote by $\partial\widetilde Y_\alpha$ the restriction of the covering space $\widetilde Z_\alpha$ on $\partial Y_\alpha \subset  Z_\alpha$. We apply the same construction above to the Dirac operator $\widetilde D_\alpha$ on $\widetilde Z_\alpha$ (i.e. by replacing $\widetilde D$ by $\widetilde D_\alpha$ and $\chi$ by the characteristic function of $Z_\alpha - Y_\alpha = \partial Y_\alpha \times [0, \infty)$ in the above construction) and denote by $\mathfrak q_\alpha$ the resulting idempotent for when $s=0$. Since the normalizing function $f$ in line \eqref{eq:normalizing} has compactly supported Fourier transform, it follows that   the idempotent  $\mathfrak q_\alpha$ lies in $ (C_0(\mathbb R^7)\otimes C^\ast_{\max}(\partial \widetilde Y_\alpha;  \widetilde Z_\alpha)^{G_\alpha})^+.$ 

The canonical $(\Gamma, G_\alpha)$-equivariant map $\widetilde Z_\alpha \to \widetilde X$ induces a natural $C^\ast$ homomorphism 
\[ \psi_\alpha \colon C_0(\mathbb R^7)\otimes C^\ast_{\max}(\partial \widetilde Y_\alpha, \widetilde Z_\alpha)^{G_\alpha} \to C_0(\mathbb R^7)\otimes C^\ast_{\max}(\widetilde K; \widetilde X)^\Gamma. \]
Let us define the map  
\[  \psi\colon \bigoplus_{\alpha} C_0(\mathbb R^7)\otimes C^\ast_{\max}(\partial \widetilde Y_\alpha, \widetilde Z_\alpha)^{G_\alpha} \to  M_\ell(\mathbb C) \otimes C_0(\mathbb R^7)\otimes C^\ast_{\max}(\widetilde K; \widetilde X)^\Gamma \]
by setting 
\[ \psi(a_1 \oplus \cdots \oplus a_\ell) = \begin{psmallmatrix}
	\psi_{1}(a_1) &  & \\
	& \ddots & \\ 
	& & \psi_{\ell}(a_\ell)
\end{psmallmatrix}.   \]
Recall that,  if  the Fourier transform $\widehat f$ of $f$ is supported in $(-\varepsilon, \varepsilon)$, then  the Fourier transform $\widehat f_\lambda $ of $f_\lambda$ is supported in $(-\lambda \varepsilon, \lambda\varepsilon)$, where $f_\lambda$ is the normalizing function given by $f_\lambda(t) = f(\lambda t)$.  Hence by replacing the normalizing function $f$ in line \eqref{eq:normalizing} by $f_\lambda$ for some sufficiently small $\lambda >0$ if necessary, we can assume the propagations of $F = f(\widetilde D)$ and $f(\widetilde D_\alpha)$ are very small. Then by a standard finite propagation argument,  it follows from the above construction that 
\[  \psi(\oplus_{\alpha=1}^\ell \mathfrak q_\alpha) = \mathfrak p_0. \]
Furthermore,  it follows from the product formula of higher index that each  $\mathfrak q_\alpha$ is a representative of the higher index class of the Dirac operator $D_{\partial Y_\alpha}$ in $KO_{0}(C_0(\mathbb R^7)\otimes C^\ast_{\max}(\partial \widetilde Y_\alpha)^{G_\alpha}).$ 
To summarize, we obtain the following $K$-class 
\[ (\oplus_{\alpha=1}^\ell \mathfrak q_\alpha, \mathfrak p_s) \in C^\ast_{\max}(\Gamma, G)^+ \cong (C_0(\mathbb R^7)\otimes C_{\iota_\ast})^+,  \]
where $C_{\iota_\ast}$ is the mapping cone $C^\ast$-algebra from line \eqref{eq:cone}.

\begin{definition}\label{def:index}
With the above notation, if $\dim X \equiv 0 \pmod 8$, the relative higher index  $\ind_{\Gamma, G}(D)$ of $D$  is defined to be
	\[ \ind_{\Gamma, G}(\widetilde D):= [(\oplus_{\alpha=1}^\ell \mathfrak q_\alpha, \mathfrak p_s)] - \left[\big( \oplus_{\alpha=1}^\ell \begin{psmallmatrix} 1 & 0 \\0 & 0\end{psmallmatrix}, \begin{psmallmatrix} 1 & 0 \\0 & 0\end{psmallmatrix}\big)   \right] \in KO_0(C^\ast_{\max}(\Gamma, G)). \]
\end{definition} 

By a theorem of Chang, Weinberger and Yu \cite[Theorem 2.18]{MR4045309}, if an $n$-dimensional spin manifold $X$ admits a complete Riemannian metric that has  uniformly positive scalar curvature, then its relative higher index is zero in $KO	_n(C^\ast_{\max}(\Gamma, G))$, that is, 
\[ \ind_{\Gamma, G}(D) = 0 \in KO_n(C^\ast_{\max}(\Gamma, G)).  \]
In fact, by a more careful finite propagation argument (as in \cite{Guo:2020ur}\cite{Xie:2021tm}),  we have the following more refined  quantitative vanishing theorem. 

\begin{theorem}[{\cite{Guo:2020ur}\cite{Xie:2021tm}}]\label{thm:quanvanish}
	 With the same notation as above, suppose $X$ admits a complete Riemannian metric $g$ such that the scalar curvature of $g$ is $\geq 1$ on the $\rho$-neighborhood $N_\rho(K)$ of $K$. Then there exists a universal constant $C>0$ such that if $\rho >C$, then the relative higher index of $D$ is zero in $KO_n(C^\ast_{\max}(\Gamma, G))$, that is, 
	 \[ \ind_{\Gamma, G}(D) = 0 \in KO_n(C^\ast_{\max}(\Gamma, G)).  \]  
\end{theorem}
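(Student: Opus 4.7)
The plan is to execute the quantitative finite-propagation argument from \cite{Guo:2020ur, Xie:2021tm}, making explicit the scale on which everything works. Fix once and for all a normalizing function $f_0$ whose distributional Fourier transform is supported in $(-\varepsilon_0, \varepsilon_0)$, and take $C := 10\varepsilon_0$ (any sufficiently large fixed multiple of $\varepsilon_0$ suffices). Then $F := f_0(\widetilde D)$ has propagation at most $\varepsilon_0$, and $F^2 - 1 = (f_0^2 - 1)(\widetilde D)$ has propagation at most $2\varepsilon_0$, uniformly in the manifold $X$.

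The first step is to convert the scalar curvature hypothesis into an operator inequality via the Lichnerowicz--Weitzenb\"ock identity $\widetilde D^2 = \widetilde\nabla^\ast\widetilde\nabla + \tfrac{1}{4}\mathrm{Sc}(g)$: any spinor $\psi$ compactly supported in $N_\rho(K)$ satisfies $\langle\widetilde D^2\psi,\psi\rangle \geq \tfrac{1}{4}\|\psi\|^2$. Combining this bound with the finite propagation of $F^2-1$ and the fact that $f_0^2 - 1$ is concentrated near $0$ and decays at infinity, one shows that for any cutoff $\widetilde\chi$ supported within distance $\rho - 2\varepsilon_0$ of $\widetilde K$, the operator $\widetilde\chi(F^2-1)\widetilde\chi$ has arbitrarily small operator norm; choosing $f_0$ so that $f_0^2-1$ decays sufficiently rapidly outside $[-1/2,1/2]$, this error is controllable enough for the subsequent homotopies of invertibles and idempotents to land in the same $K$-theory class as their strict-inverse counterparts.

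Once this localized estimate is in place, the explicit formulas \eqref{eq:inv} and \eqref{eq:idempath} for $\mathcal W_s$ and $\mathfrak p_s$ show that the products $\widetilde\chi\mathcal U\widetilde\chi \cdot \widetilde\chi\mathcal V\widetilde\chi$ and $\widetilde\chi\mathcal V\widetilde\chi \cdot \widetilde\chi\mathcal U\widetilde\chi$ are close to $\widetilde\chi$, and hence $\mathcal W_s$ is homotopic through invertibles in the matrix algebra over $(C_0(\mathbb R^7)\otimes C^\ast_{\max}(\widetilde K;\widetilde X)^\Gamma)^+$ to the constant $\mathcal W_1$. Consequently the path $s \mapsto \mathfrak p_s$ is homotopic rel endpoints to the constant path at the trivial idempotent. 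Running precisely the same argument inside each cylindrical completion $\widetilde Z_\alpha$---whose metric agrees with $g$ on a collar of $\partial\widetilde Y_\alpha$ of width comparable to $\rho$---trivializes each $\mathfrak q_\alpha$ in $(C_0(\mathbb R^7)\otimes C^\ast_{\max}(\partial\widetilde Y_\alpha;\widetilde Z_\alpha)^{G_\alpha})^+$. Assembling these homotopies compatibly with the canonical map $\psi$ produces a nullhomotopy of the pair $(\oplus_\alpha \mathfrak q_\alpha, \mathfrak p_s)$ in $C^\ast_{\max}(\Gamma, G)$, which gives $\ind_{\Gamma, G}(D) = 0$.

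The main technical obstacle is the quantitative estimate for $\widetilde\chi(F^2-1)\widetilde\chi$: the spectral decomposition of $\widetilde D$ is nonlocal, so the pointwise Lichnerowicz bound on $\widetilde D^2$ must be converted into operator-norm control of $F^2 - 1$ near $K$ via a careful functional-calculus plus finite-propagation argument, and this is what dictates the universal constant $C$. The rest of the proof---the homotopies of invertibles in the Roe-type algebras and the compatibility of the two constructions via $\psi$---is a largely formal consequence of the explicit formulas defining $\ind_{\Gamma,G}(D)$, with the only additional care being that one must track $C$ uniformly in $X$, which is automatic since $f_0$ is fixed in advance.
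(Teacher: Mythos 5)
Your overall strategy (Lichnerowicz bound from $\Sc\geq 1$ on $N_\rho(K)$, a normalizing function with compactly supported Fourier transform, trivialization of $\mathfrak p_s$ and of the $\mathfrak q_\alpha$, then assembly through $\psi$) is the same as the paper's, and like the paper you ultimately defer the analytic core to the quantitative results of Guo--Xie--Yu and Xie. But there is a concrete gap in your treatment of the cylinder side. You claim that ``running precisely the same argument inside each cylindrical completion $\widetilde Z_\alpha$'' trivializes $\mathfrak q_\alpha$. The cutoff defining $\mathfrak q_\alpha$ is the characteristic function of the attached end $\partial Y_\alpha\times[0,\infty)$, and the auxiliary complete metric on $Z_\alpha$ is only required to agree with $g$ near $Y_\alpha$; on the cylindrical end there is no scalar curvature control at all, and even at the interface $\partial Y_\alpha\times\{0\}$ the control is one-sided (the $\rho$-collar inside $Y_\alpha$). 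So the localized estimate you invoke is simply not available where that cutoff sits, and your ``same argument'' breaks there. The paper's first step is exactly the device that repairs this: the linear path \eqref{eq:linearpath} shows the index class is unchanged when the cutoff $\chi$ is replaced by $\chi_r$, which lets one relocate the interface a distance $\sim\rho/2$ into the region where the metric agrees with $g$ and $\Sc\geq 1$ holds on both sides, simultaneously for $\mathfrak p_s$ and for each $\mathfrak q_\alpha$, so that the two trivializations remain compatible under $\psi$. Your sketch omits this relocation entirely.

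A second, lesser point: your central claim that $\widetilde\chi(F^2-1)\widetilde\chi$ becomes small ``by combining'' the Lichnerowicz bound with the finite propagation of $F^2-1$ and the decay of $f_0^2-1$ is not a proof. The bound $\|\widetilde D v\|\geq \tfrac14\|v\|$ holds only for sections compactly supported in $N_\rho(K)$, while the functional calculus of $\widetilde D$ is global: the spectrum near $0$ contributed by the region outside $N_\rho(K)$ is not excluded by the local estimate, and the spectral projection onto $[-1/2,1/2]$ does not preserve supports. Converting the local gap into operator-norm control of the localized functional calculus is precisely the content of \cite[Lemma 3.2 and Appendix A]{Xie:2021tm} and of the argument in \cite{Guo:2020ur}, and it is where the universal constant $C$ actually comes from. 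You do flag this as the main technical obstacle, which is honest, but as written your middle paragraph presents it as a routine combination of facts; it is not, and your proposal stands or falls with those cited lemmas in exactly the way the paper's own sketch does.
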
 
\begin{proof}
The essential ingredients of the proof have already been carried out in \cite{Guo:2020ur} and \cite{Xie:2021tm}. 	For the convenience of the reader, we shall sketch a proof here. 

Let $\chi$ be the characteristic function $\chi$ on $K$ that appeared in  the above construction of the index class $\ind_{\Gamma, G}(D)$.  Observe that if we replace  $\chi$ by the characteristic function $\chi_r$  of the $r$-neighborhood $N_r(K)$  of $K$, the resulting new element from the construction is also a representative of the index class $\ind_{\Gamma, G}(D)$.    This can be seen by replacing $ \widetilde \chi \mathcal U \widetilde \chi$ and $\widetilde \chi \mathcal V \widetilde \chi$ by 
\begin{equation}\label{eq:linearpath}
t\widetilde \chi \mathcal U \widetilde \chi + (1-t) \widetilde \chi_r \mathcal U \widetilde \chi_r   \textup{ and } t\widetilde \chi \mathcal V \widetilde \chi + (1-t) \widetilde \chi_r \mathcal V \widetilde \chi_r 
\end{equation} 
in the definition of $\mathcal W_s$ from line \eqref{eq:inv}, with $t\in [0, 1]$, where $\widetilde \chi_r$ is the lift of $\chi_r$ to $\widetilde X$. 

By assumption the scalar curvature of $g$ is $\geq 1$ on $N_\rho(K)$. It follows that 
\[  \|\widetilde D v\| \geq \frac{1}{4} \|v\| \] 
for all smooth sections $v \in C_c^\infty( \interior{N}_\rho(K), \widetilde {\mathcal S}),$ where $ \interior{N}_\rho(K)$ is the interior of ${N}_\rho(K)$. A finite propagation argument shows that, as long as $\rho$ is sufficiently large, we can choose the characteristic function  $\chi_{\rho/2}$ of $N_{\rho/2}(K)$ and an appropriate normalizing function $f$ in line \eqref{eq:normalizing} such that $\mathfrak p_s$ in line \eqref{eq:idempath} becomes a trivial idempotent (cf. \cite[Lemma 3.2 and Appendix A]{Xie:2021tm}). The same argument applies to the construction of $\mathfrak q_\alpha$ above so that each $\mathfrak q_\alpha$ becomes a trivial idempotent (cf. \cite[Proof of Theorem 1.3]{Guo:2020ur}). This completes the proof. 
\end{proof}

At the end of this section, let us  review the construction of the index map (cf. \cite[Section 3]{MR4045309}):
\begin{equation*}
	\sigma\colon KO^{lf}_\ast(Y)  \to KO_\ast(\ualg(Y)). 
\end{equation*}
and also introduce a notion of $\sideset{}{^1}\varprojlim$ higher index. 

An element in $ KO^{lf}_\ast(Y)$ is represented by a smooth open spin manifold $X$ together with a proper continuous coarse map $\varphi\colon X\to Y$. Let $\{Y_i\}$ be an admissible exhaustion on $Y$ as in Definition \ref{def:exhaust}. Without loss of generality,  assume $X$ is equipped with an exhaustion $\{X_i\}$ such that  the $X_i$ are compact connected codimension zero submanifolds with $X_i\subset \interior{X}_{i+1}$ and $X = \cup_i X_i$. Here $\interior{X}_{i+1}$ is the interior of $X_{i+1}$. Furthermore, Without loss of generality, we assume $X$ is equipped with a complete Riemannian metric so that $Y_i\subset \varphi(X_i) \subset Y_{i+1}$.

 Let us denote $\Gamma = \pi_1(X)$ and $G_i = \pi_1(X - X_i)$. Here if $X-X_i$ has more than one connected components, then $\pi_1(X-X_i)$ should mean the fundamental groupoid of $X-X_i$, that is, $\pi_1(X-X_i)$ is the disjoint union  $\coprod \pi_1(Y_{i\alpha})$, where $Y_{i\alpha}$ are the components of $X-X_i$. In this case, the maximal group $C^\ast$-algebra of $G_i = \pi_1(X-X_i)$ is defined to be 
\[   C_{\max}^\ast(G_i) = \bigoplus_{\alpha} C_{\max}^\ast(\pi_1(Y_{i\alpha})).  \]
Let $\iota_\alpha\colon \pi_1(Y_{i\alpha}) \to \Gamma$ be the group homomorphism induced by the inclusion $Y_{i\alpha} \hookrightarrow X$. Let $C_{\iota_\ast}$ be the mapping cone $C^\ast$-algebra induced by the homomorphism 
\begin{equation}\label{eq:mappingcone}
	\iota_\ast\colon C_{\max}^\ast(G_i) \otimes \mathcal K\to M_\ell(\mathbb C) \otimes C_{\max}^\ast(\Gamma) \otimes \mathcal K
\end{equation}
given by 
\[  \iota_\ast(a_1\oplus \cdots \oplus a_\ell) =  \begin{psmallmatrix}
	(\iota_{1})_\ast a_1 &  & \\
	& \ddots & \\ 
	& & (\iota_{\ell})_\ast a_\ell
\end{psmallmatrix}, \]
where $M_\ell(\mathbb C)$ is the  algebra of $(\ell\times \ell)$ matrices.    We denote by $C_{\max}^\ast(\Gamma, G_i)$  the $7$th suspension $\sus^7C_{\iota_\ast} \cong C_0(\mathbb R^7)\otimes C_{\iota_\ast} $ of  the mapping cone $C^\ast$-algebra  $C_{\iota_\ast}$. The inclusions $(X, X-X_{i+1}) \hookrightarrow (X, X- X_i)$ induce $C^\ast$ homomorphisms 
\[ \rho_{i+1} \colon C_{\max}^\ast(\Gamma, G_{i+1}) \to   C_{\max}^\ast(\Gamma, G_i).   \]
Let $\rho \colon \prod_{i=1}^\infty C_{\max}^\ast(\Gamma, G_i) \to \prod_{i=1}^\infty C_{\max}^\ast(\Gamma, G_i)$ be the homomorphism that maps $(a_1, a_2, \cdots)$ to $(\rho_2(a_2), \rho_3(a_3), \cdots)$. By definition, we have 
	\[ \ualg(X)\coloneqq \Big\{ a \in C([0, 1], \prod_{i=1}^\infty C_{\max}^\ast(\Gamma, G_i) ) \mid \rho(a(0)) = a(1)\Big\}.  \]
Now suppose $\widetilde D$ is the  Dirac operator on the universal covering space of $\widetilde X$. By the construction of relative higher index (cf. Definition \ref{def:index}), for each $(X, X - X_i)$, we have the relative higher index of $\widetilde D$ represented by 
\[  [(\oplus_{\alpha} \mathfrak q_{i\alpha}, (\mathfrak p_i)_s)] - \left[\big( \oplus_{\alpha} \begin{psmallmatrix} 1 & 0 \\0 & 0\end{psmallmatrix}, \begin{psmallmatrix} 1 & 0 \\0 & 0\end{psmallmatrix}\big)   \right] \in KO_n(C_{\max}^\ast(\Gamma, G_i) ). \]
For simplicity, let us write 
\[ \mathfrak a_i \coloneqq (\oplus_{\alpha} \mathfrak q_{i\alpha}, (\mathfrak p_i)_s) \textup{ and } \mathfrak b_i \coloneqq \big( \oplus_{\alpha} \begin{psmallmatrix} 1 & 0 \\0 & 0\end{psmallmatrix}, \begin{psmallmatrix} 1 & 0 \\0 & 0\end{psmallmatrix}\big)  \] 
in $ C_{\max}^\ast(\Gamma, G_i)$.

Consider the characteristic functions $\chi_i$ of $X_i$ and $\chi_{i+1}$ of $X_{i+1}$. By applying  a linear path similar to that  from line \eqref{eq:linearpath} in the construction of  relative higher index, we obtain a continuous path of elements $\mathfrak a_i(t)$ with $t\in [0, 1]$ such that 
\[ \mathfrak a_i(0) = \mathfrak a_i \textup{ and } \mathfrak a_i(1) = \rho_{i+1}(\mathfrak a_{i+1}),  \]
which in particular defines a $K$-theory class of $\mathcal A(X)$. 

\begin{definition}
	 The index map $\sigma\colon KO^{lf}_\ast(Y)  \to KO_\ast(\ualg(Y))$ is defined by setting 
	 \[ \sigma([D, \varphi]) =  \varphi_\ast[(\mathfrak a_1(t), \mathfrak a_2(t), \cdots )] - \varphi_\ast\big[ (\mathfrak b_1, \mathfrak b_2, \cdots) \big] \in KO_n(\ualg(Y)), \] 
	where $\varphi_\ast\colon KO_n(\ualg(X)) \to KO_n(\ualg(Y))$ is the homomorphism induced by the map $\varphi\colon X\to Y$. 
\end{definition}

In \cite[Theorem 3.3]{MR4045309},  Chang, Weinberger and Yu showed that if $X$ admits a complete Riemannian metric that has uniformly positive scalar curvature on the whole $X$, then the above index $\sigma([D, \varphi])$ vanishes in  $KO_n(\ualg(Y))$. This for example follows by an argument that is similar to that used in the proof of Theorem \ref{thm:quanvanish}.

Note that we have the following Milnor exact sequence (cf. line \eqref{eq:milnor}): \begin{equation*}
		0 \to \sideset{}{^1}\varprojlim KO_{n+1}(C_{\max}^\ast(\Gamma, G_i)) \xrightarrow{\zeta} KO_n (\ualg(X)) \xrightarrow{\theta}  \varprojlim KO_n(C_{\max}^\ast(\Gamma, G_i)) \to 0.
\end{equation*}
Let $\{X_i\}$ be the exhaustion of $X$ as above. Suppose we are in a special case where the relative higher index of $D$ on $(X, X_i)$ vanishes in $KO_n(C_{\max}^\ast(\Gamma, G_i))$. For example, by Theorem \ref{thm:quanvanish}, such a condition  is satisfied if for each $X_i\subset X$, there is a complete Riemannian metric $g_i$ on $X$ such that the scalar curvature of $g_i$ is  $\geq 1$ on the $\rho$-neighborhood $N_\rho(X_i)$ of $X_i$ for some sufficiently large $\rho>0$. Let $\sigma(D)$ be the index of $D$ in $KO_n (\ualg(X))$. Then in this case, the image of $\sigma(D)$ under the map $\theta$ is zero in  $\varprojlim KO_n(C_{\max}^\ast(\Gamma, G_i))$. It follows $\sigma(D) = \zeta(c)$ for some unique element $c \in  \sideset{}{^1}\varprojlim KO_{n+1}(C_{\max}^\ast(\Gamma, G_i))$ in this case. 

\begin{definition}\label{def:limone}
	When $\theta (\sigma(D)) $ vanishes in  $\varprojlim KO_n(C_{\max}^\ast(\Gamma, G_i))$, we define the $\sideset{}{^1}\varprojlim$ higher index of $D$ to be 
	\[ \sideset{}{^1}\varprojlim \ind_{\Gamma, G}(D) \coloneqq    c \in  \sideset{}{^1}\varprojlim KO_{n+1}(C_{\max}^\ast(\Gamma, G_i)), \] where $c$ is the unique element in $\sideset{}{^1}\varprojlim KO_{n+1}(C_{\max}^\ast(\Gamma, G_i))$ such that $\zeta(c) = \sigma(D)$. 
\end{definition}

\begin{remark}
The definition of  $\mathcal A(X)$ depends on the particular choice of an exhaustion $\{X_i\}$ and may vary if we choose a different exhaustion of $X$. However, the $KO$-theory of $\mathcal A(X)$ is in fact independent of the choice of exhaustion. Indeed, suppose $\{X'_k\}$ is another exhaustion of $X$, then there exists subsequence $\{X_{i_k}\}$ of the exhaustion $\{X_i\}$ such that $X'_k\subset X_{i_k}$ for each $k\geq 1$. If we denote by $\mathcal A(X;  \{X_{i_k}\})$ (resp. $\mathcal A(X; \{X'_{k}\})$) the $C^\ast$-algebra $\mathcal A(X)$ determined by the exhaustion $\{X_{i_k}\}$ (resp. $\{X'_{k}\}$), then there is a natural $C^\ast$ homomorphism
\begin{equation}\label{eq:mapexhaust1}
\mathcal A(X; \{X_{i_k}\})\to  \mathcal A(X; \{X'_{k}\})
\end{equation} induced by the canonical inclusions of  spaces $(X, X - X_{i_k})\hookrightarrow (X, X - X'_k)$. Similarly, there is subsequence $\{X'_{k_i}\}$ of the exhaustion $\{X'_k\}$ such that $X_i\subset X'_{k_i}$ for each $i\geq 1$, which gives a $C^\ast$ homomorphism 
\begin{equation} \label{eq:mapexhaust2} 
	\mathcal A(X; \{X'_{k_i}\})\to  \mathcal A(X; \{X_{i}\}). 
\end{equation}
Consider the following Milnor exact sequence: 
\begin{equation*}
\resizebox{.95\hsize}{!}{$	0 \to \sideset{}{^1}\varprojlim KO_{n+1}(C_{\max}^\ast(\Gamma, G_i)) \to KO_n (\ualg(X; \{X_i\})) \to  \varprojlim KO_n(C_{\max}^\ast(\Gamma, G_i)) \to 0.$}
\end{equation*} 
As both $\varprojlim KO_n(C_{\max}^\ast(\Gamma, G_i))$ and $\sideset{}{^1}\varprojlim KO_{n+1}(C_{\max}^\ast(\Gamma, G_i))$ remain unchanged when passing to (cofinal) subsequences, it follows that $KO_n (\ualg(X; \{X_i\}))$ remains unchanged when passing to subsequences of the exhaustion $\{X_i\}$. Now by passing further to subsequences of $\{X_{i_k} \}$ and $\{X'_{k_i}\}$, it is not difficult to see that the $C^\ast$ homorphisms from line \eqref{eq:mapexhaust1} and \eqref{eq:mapexhaust2} induce isomorphisms at the level of $KO$-theory.  This shows that $KO_\ast(\ualg(X))$ is independent of the choice of exhaustion. 
\end{remark}

\section{Negative answers to Gromov's compactness question}\label{sec:neg}

In this section, we prove our main theorem (Theorem \ref{thm:main-intro}). 
First, let us fix some notation.  Consider an  inverse system of groups
\[ G_0 \xleftarrow{\varphi_1} G_1 \xleftarrow{\varphi_2} G_2   \xleftarrow{\varphi_3} \cdots  \]
where each $\varphi_i$ is surjective. Let $Y_{0}$ be the  cone over  $BG_0$, where $BG_0$ is the classifying space of $G_0$. We define $Y_i$ inductively as follows. Note that the group homomorphism $\varphi_i$ induces  a continuous map $\Phi_i \colon BG_i \to BG_{i-1}$, where $BG_i$ is the classifying space of $G_i$. Let $Y_i$ be the mapping cylinder obtained by gluing $BG_i\times I = BG_i\times [0, 1]$ to $Y_{i-1}$ along $\partial Y_{i-1} = BG_{i-1}$ via the map $\Phi_i$. 

\begin{definition}\label{def:uspace}
We define  $\uspace_G$ to be 
the resulting mapping cylinder of the above infinite composite. Sometimes we say $\uspace_G$ is the classifying space associated to the inverse system $\{G_i, \varphi_i\}$. 
\end{definition}

Throughout the rest of the paper,  we will only use groups $G_i$ of the form $G_i = \mathbb Z^2\ast F_i$,  where $F_i$ is a finitely generated free group and $\mathbb Z^2\ast F_i$ is the free product of $\mathbb Z^2$ with $F_i$. So from now on, for simplicity,  let us assume each $BG_i$ is compact and has been equipped with a complete metric. Clearly, by appropriately stretching each cylinder $BG_i\times I$,  we can equip $\uspace_G$ with a complete metric such that  the sequence $\{Y_i\}$ becomes an admissible exhaustion of $\uspace_G$ in the sense of Definition \ref{def:exhaust}. 	Let $\psi_i\colon (Y_i, \partial Y_{i}) \to (Y_{i-1}, \partial Y_{i-1})$ be the obvious collapse map, that is,  $\psi_i$ is the identity map on $Y_{i-1}\subset Y_i$ and $\psi_i$ collapses $BG_i\times I$ to $\partial Y_{i-1} = BG_{i-1}$ via the map $\Phi_i$. Let us write 
$Y_{i, i+1}$ for the annulus region  $Y_{i+1} - \interior{Y}_{i}$ between $\partial Y_{i}$ and $\partial Y_{i+1}$. Clearly, $Y_{i, i+1}$ is just the mapping cylinder  obtained by gluing $BG_i\times I$ to $BG_{i-1}$ via the map $\Phi_i \colon BG_i \to BG_{i-1}$.

For the above space $B_G$ with the exhaustion $\{Y_i\}$, there is  Milnor exact sequence
\[  0 \to \sideset{}{^1}\varprojlim KO_{n+1}(Y_j, \partial Y_j) \to KO^{lf}_n (\uspace_G) \to  \varprojlim KO_n(Y_j, \partial Y_j) \to 0\]
where $KO^{lf}_n (\uspace_G)$ is the $n$-th locally finite $KO$-homology of $\uspace_G$. In the following, we shall construct an  inverse system of groups
\[ G_0 \xleftarrow{\varphi_1} G_1 \xleftarrow{\varphi_2} G_2   \xleftarrow{\varphi_3} \cdots  \]
such that each $\varphi_i$ is surjective and  $\sideset{}{^1}\varprojlim KO_{n+1}(Y_j, \partial Y_j)$ is nontrivial for the associated classifying space $B_G$ equipped with the exhaustion $\{Y_i\}$.

Let us define $G_0 = \mathbb Z^2$ and  $G_i = \mathbb Z^2 \ast F_i$,  where each $F_i$ is a finitely generated free group. Let \[  \varphi_i\colon  \mathbb Z^2\ast F_i \to   \mathbb Z^2\ast F_{i-1} \]
is a  surjective group homomorphism such that $\varphi_i $ maps the subgroup $\mathbb Z^2 \ast \{e\} $ of  $\mathbb Z^2\ast  F_i$ to the subgroup   $\mathbb Z^2 \ast \{e\} $ of $\mathbb Z^2\ast F_{i-1}$ via  the $\times 3$ map, that is,  
\[ \mathbb Z^2 \to \mathbb Z^2 \textup{ by } (a, b) \mapsto (3a, 3b).  \] 
Clearly, $F_i$ and $\varphi_i$ with the above properties exist. Consider the resulting inverse system of groups 
\begin{equation}\label{eq:sys}
	G_0 \xleftarrow{\varphi_1} G_1 \xleftarrow{\varphi_2} G_2   \xleftarrow{\varphi_3} \cdots 
\end{equation}  
We have the following non-vanishing result for the $\varprojlim^1$ term. 

\begin{proposition}\label{prop:nonzero}
	Let $\uspace_G$ be the classifying space associated to the inverse system given in \eqref{eq:sys} equipped with the exhaustion $\{Y_i\}$.  Then  the group $\varprojlim^1 KO_3(Y_j, \partial Y_j)$  is nontrivial. 
\end{proposition}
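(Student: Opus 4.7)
The plan is to identify each $KO_3(Y_j,\partial Y_j)$ with $\widetilde{KO}_2(BG_j)$, peel off the torsion, and reduce to the classical nonzero $\sideset{}{^1}\varprojlim$ of the tower $\cdots\xleftarrow{\times 9}\mathbb Z\xleftarrow{\times 9}\mathbb Z$, the $9$ being the degree of the $\times 3$ self-map of $T^2$.

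First I would verify that every $Y_j$ is contractible: $Y_0$ is a cone, and inductively $Y_j$ is the union of $Y_{j-1}$ with the mapping cylinder of $\Phi_j$, which deformation retracts onto $\partial Y_{j-1}\subset Y_{j-1}$. The long exact sequence of the pair $(Y_j,\partial Y_j)$ then produces a natural isomorphism $KO_3(Y_j,\partial Y_j)\cong\widetilde{KO}_2(BG_j)$ under which the collapse map $\psi_j$ corresponds to $(\Phi_j)_\ast$. Using $B(\mathbb Z^2\ast F_j)\simeq T^2\vee BF_j$ and the wedge splitting of reduced $KO$-homology,
\[
\widetilde{KO}_2(BG_j)\;\cong\;\widetilde{KO}_2(T^2)\oplus\widetilde{KO}_2(BF_j).
\]
Here $\widetilde{KO}_2(BF_j)$ is $2$-torsion, being a sum of copies of $\widetilde{KO}_2(S^1)=\mathbb Z/2$; and the Atiyah--Hirzebruch spectral sequence gives $\widetilde{KO}_2(T^2)\cong\mathbb Z\oplus(\mathbb Z/2)^2$, with the free summand being the image of $H_2(T^2;\mathbb Z)=\mathbb Z$, which sits in the top filtration slot of the AHSS and neither receives nor emits any differential.

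Second, I would compute the transition map on torsion-free quotients. Since $\varphi_j$ restricts to $\times 3$ on $\mathbb Z^2$, the restriction of $\Phi_j$ to the $T^2$-summand of $BG_j$ factors through $T^2\hookrightarrow BG_{j-1}$ as the degree-$9$ self-map $(3,3)\cdot$; it therefore acts on $H_2(T^2;\mathbb Z)=\mathbb Z$, and by naturality of the AHSS on $\widetilde{KO}_2(T^2)/\mathrm{tors}=\mathbb Z$, as multiplication by $9$. The image of $\widetilde{KO}_2(BF_j)$ is automatically torsion in the target, so after quotienting by torsion the whole inverse system becomes
\[
\cdots\xrightarrow{\times 9}\mathbb Z\xrightarrow{\times 9}\mathbb Z\xrightarrow{\times 9}\mathbb Z.
\]

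Finally, applying $\sideset{}{^1}\varprojlim$ to the short exact sequence of towers $0\to\mathrm{tors}\to\widetilde{KO}_2(BG_j)\to\mathbb Z\to 0$ and using right-exactness yields a surjection $\sideset{}{^1}\varprojlim\widetilde{KO}_2(BG_j)\twoheadrightarrow\sideset{}{^1}\varprojlim(\cdots\xleftarrow{\times 9}\mathbb Z)$. The target is the classical nontrivial $\sideset{}{^1}\varprojlim$: the element $(1,0,0,\dots)\in\prod\mathbb Z$ has no preimage under $(a_j)\mapsto(a_j-9a_{j+1})$, because the tail relations $a_j=9a_{j+1}=81a_{j+2}=\cdots$ force $a_j=0$ in $\mathbb Z$ for $j\geq 2$, incompatible with $a_1-9a_2=1$. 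The one point requiring care is that the free $\mathbb Z$ summand really does pass cleanly through the tower; this is ensured by $BF_j$ being a $1$-complex, so that it contributes nothing to $\widetilde{KO}_2$ outside of $2$-torsion and thus disappears after quotienting by torsion.
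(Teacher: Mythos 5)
Your reduction is sound and is essentially the paper's argument carried out in more detail: contractibility of the $Y_j$ gives $KO_3(Y_j,\partial Y_j)\cong\widetilde{KO}_2(BG_j)$, the wedge splitting $BG_j\simeq T^2\vee BF_j$ and the AHSS isolate a free summand detected by the edge homomorphism to $H_2(BG_j;\mathbb Z)\cong\mathbb Z$, the transition maps act on that summand by $\times 9$, and the torsion subtowers (finite $2$-groups) are harmless. The paper argues the same way, quoting only $H_2(BG_j)\cong\mathbb Z$ and the $\times 9$ maps, and concluding that $\sideset{}{^1}\varprojlim KO_3(Y_j,\partial Y_j)$ contains a copy of $\widehat{\mathbb Z}_3/\mathbb Z$ (your surjection onto $\sideset{}{^1}\varprojlim$ of the quotient tower is equally good for nontriviality; in fact it is an isomorphism, since the torsion towers are Mittag-Leffler).

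However, your final step --- the verification that $\sideset{}{^1}\varprojlim\bigl(\mathbb Z\xleftarrow{\times 9}\mathbb Z\xleftarrow{\times 9}\cdots\bigr)\neq 0$ --- is wrong as written. The element $(1,0,0,\dots)$ \emph{is} in the image of $\Phi\colon(a_j)\mapsto(a_j-9a_{j+1})$: take $a_1=1$ and $a_j=0$ for $j\geq 2$. Your tail relations do force $a_j=0$ for $j\geq 2$, but that is perfectly compatible with $a_1-9a_2=1$ (set $a_1=1$); more generally, every finitely supported sequence $(c_j)$ lies in the image, since one can solve $a_j=c_j+9a_{j+1}$ downward from the index beyond which the $c_j$ vanish. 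To exhibit a nonzero class you need a witness that is not finitely supported, e.g. $(1,1,1,\dots)$: a preimage would satisfy $a_1=1+9+\cdots+9^{k-1}+9^k a_{k+1}$, hence $8a_1+1\equiv 0\pmod{9^k}$ for all $k$, which no integer $a_1$ can satisfy. Alternatively, cite the standard computation $\sideset{}{^1}\varprojlim\bigl(\mathbb Z\xleftarrow{\times 9}\mathbb Z\xleftarrow{\times 9}\cdots\bigr)\cong\widehat{\mathbb Z}_3/\mathbb Z\neq 0$, which is exactly what the paper invokes. With that correction your proof is complete and follows the same route as the paper's.
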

\begin{proof}
	Note that each $Y_i$ is contractible. It follows that 
	\[  KO_n(Y_i, \partial Y_i) \cong \rko_{n-1}(\partial Y_i) = \rko_{n-1}(BG_i). \]
	The classifying space $BG_i = B\mathbb Z^2 \vee BF_i$ is the wedge sum of  $B\mathbb Z^2$ and $BF_i$, where $B\mathbb Z^2$ is a $2$-dimensional torus $\mathbb T^2$ and $BF_i$ is a wedge sum of circles. In particular, we have 
	\[     H_{2}(\partial Y_i) = H_2(BG_i) = H_2(B\mathbb Z^2) \cong \mathbb Z \]
	in this case. It follows that  the group homomorphism $ \varphi_i\colon \mathbb Z^2\ast F_i \to \mathbb Z^2\ast F_{i-1}$ induces the following homomorphism on homology: 
	\[  H_2(BG_i)  \cong \mathbb Z\xrightarrow{\times 9} H_2(BG_i)  \cong \mathbb Z. \]
	It follows that $\varprojlim^1 KO_3(Y_j, \partial Y_j)$ contains a copy of 
	$ \widehat{\mathbb Z}_{3}/\mathbb Z$
	as a subgroup, where $\widehat{\mathbb Z}_{3}$ is the group of $3$-adic integers.\footnote{Recall that $\widehat{\mathbb Z}_{3}$ is the inverse limit of the inverse system 
	 \[ \cdots \to \mathbb Z/3^3\mathbb Z  \to \mathbb Z/3^2\mathbb Z \mathbb \to  Z/3\mathbb Z. \]
 Equivalently, $\widehat{\mathbb Z}_{3}$ can also be viewed as the inverse limit of of the inverse system 
 \[ \cdots \to \mathbb Z/9^3\mathbb Z  \to \mathbb Z/9^2\mathbb Z \mathbb \to  Z/9\mathbb Z. \] } In particular, $\varprojlim^1 KO_3(Y_j, \partial Y_j)$ is nontrivial. 
\end{proof}

The next few results use some basic constructions from surgery theory. Let us first recall some standard terminology from surgery theory. Let $M$ be a manifold of dimension $n$. Observe that 
\[   \partial (\sphere^p \times \disk^{q}) = \sphere^p \times \sphere^{q-1} = \partial (\disk^{p+1} \times \sphere^{q-1}). \] 
Given an embedded $ \sphere^p \times \disk^{q} \subset M$ with $p+q = n$, let $M'$ be the manifold obtained by removing the interior of $\sphere^p \times \disk^{q}$ and gluing in a copy of   $\disk^{p+1}\times \sphere^{q-1}$ along $\sphere^p \times \sphere^{q-1}$. In this case, we say $M'$ is obtained from $M$ by a surgery of dimension $p$ (or codimension $q$). 

The trace of a $p$-surgery is given by 
\[  W \coloneqq (M\times I) \cup_{\sphere^p \times \disk^{q}} (\disk^{p+1}\times \disk^q)   \]
which is a cobordism between $M$ and $M'$. In this case, we say $W$ is obtained from $M\times I$ by attaching a handle of index $(p+1)$.

\begin{lemma}\label{lm:surgery}
	Let $W$ be a spin cobordism between two closed spin manifolds  $\partial_-W$ and $\partial_+W$. Assume  both $\partial_+ W$ and $W$ are connected,  and  $\pi_1(\partial_+W) \to \pi_1(W)$ is an  isomorphism. If $\dim W  \geq 6$, then there exists a spin cobordism $W'$ between $\partial_-W$ and $\partial_+W$ such that 
	\begin{enumerate}[$(1)$]
		\item   $\pi_1(\partial_+W) \to \pi_1(W')$ is an   isomorphism, 
		\item  $W'$ is obtained from $(\partial_+W)\times I$ by attaching handles of index $\geq 3$, or  equivalently $\partial_+W$ is obtained from $\partial_-W$ via surgeries of codimension $\geq 3$. Moreover, $W$ can be obtained from $W'$ by finitely many surgeries of codimension $\geq 3$ 	away from the boundary. 
	\end{enumerate} 
\end{lemma}
\begin{proof}
	A proof of this lemma can be for example found in the proof of \cite[Theorem 2.2]{MR866507}. See also  \cite[Proposition 3.1]{MR4184617}. For the convenience of the reader, let us repeat the argument here.  Note that there is an exact sequence of homotopy groups:
	\[  \pi_2(\partial_+W) \to \pi_2(W) \to \pi_2(W, \partial_+W) \to \pi_1(\partial_+W) \to \pi_1(W)\]
	which reduces to 
	\[  \pi_2(\partial_+W) \to \pi_2(W) \to \pi_2(W, \partial_+W) \to 0\]
	since $\pi_1(\partial_+W) \to \pi_1(W)$ is an isomorphism. As both $\partial_+W$ and $W$ are connected and $\pi_1(\partial_+W) \to \pi_1(W)$ is an  isomorphism, it follows that $W$ is obtained from $\partial_+ W$ by attaching finitely many handles of index $\geq 2$. In particular, it follows that $\pi_2(W, \partial_+ W)$ is finitely generated  a $\mathbb Z[\pi_1(\partial_+ W)]$-module. Since $\dim W \geq 6$ (in fact, $\geq 5$ would suffice here), a set of elements of $\pi_2(W)$ that generate $\pi_2(W, \partial_+W) = \pi_2(W)/\pi_2(\partial_+W)$ can be represented by smoothly embedded $2$-spheres which do not intersect the boundary of $W$.  Since $W$ is a spin manifold, these $2$-spheres have trivial normal bundles, and can be removed by surgeries preserving the spin structure and fundamental group.  Since $\pi_2(W, \partial_+ W)$ is finitely generated as   a $\mathbb Z[\pi_1(\partial_+ W)]$-module, only finitely many surgeries are needed  in order to annihilate $\pi_2(W, \partial_+ W)$.  Denote the resulting new cobordism by $W'$. Note that $(W', \partial_+W)$ is $2$-connected.  Now choose a handle decomposition of $W'$, and proceed to eliminate $0$-, $1$- and $2$-handles as in  \cite[Lemma 1]{MR0189048}. Note that the proof of \cite[Lemma 1]{MR0189048} shows that such a process does not require $W'$ to be an $h$-cobordism, only that $\pi_1(\partial_+W) \cong  \pi_1(W')$ and that $\pi_2(W', \partial_+W)=0$. Thus we conclude that  $W'$ can be obtained from $(\partial_+W)\times I$ by attaching handles of index $\geq 3$. Turning this handle decomposition upside down, we see that $(\partial_+W)\times I$ is obtained from $W'$ by attaching handles of index $\leq n-2$, i.e., $\partial_+W$ is obtained from $\partial_-W$ by performing surgeries of codimension $\geq 3$.  Furthermore, since by construction  $W'$ is obtained from $W$ by surgeries of dimension $2$, we see that $W$ can be obtained from $W'$ by surgeries of codimension $3$.
\end{proof}

\begin{lemma}\label{lm:surgery2}
		Let $W$ be a  cobordism between two closed manifolds  $\partial_-W$ and $\partial_+W$ such that both $\partial_+ W$ and $W$ are connected, and  $\pi_1(\partial_+W) \to \pi_1(W)$ is surjective.
		 If $\dim W  \geq 6$, then  $W$ is obtained from $(\partial_+W)\times I$ by attaching handles of index $\geq 2$ (in other words,   $\partial_+W$ is obtained from $\partial_-W$ via surgeries of codimension $\geq 2$). 
\end{lemma}
\begin{proof}
Recall that given any finitely generated group $G$, if  $H$ is normal subgroup of $G$ such that $G/H$ is a finitely presented group, then $H$ is finitely normally generated\footnote{A normal subgroup $H$ of $G$ is finitely normally generated if $H$ is the normal closure of a subgroup generated by finitely many elements.} in $G$.  In particular, since both $\pi_1(\partial_+W)$ and $\pi_1(W)$ are finitely presented and $\pi_1(\partial_+W) \to \pi_1(W)$ is surjective, it follows that the kernel of  $\pi_1(\partial_+W) \to \pi_1(W)$ is finitely normally generated. We present these finitely many elements  by disjoint circles, and consider disjoint two dimensional discs $\{\mathbb D_i^2\}_{1\leq i \leq \ell}$ in $W$ which intersect $\partial_+ W$  in these circles. A small regular neighborhood $V$ of $(\partial_+ W \bigcup \cup_{i=1}^{\ell} \mathbb D_i^2)$ gives a cobordism between $\partial_+ W$ and a new closed manifold, denoted by $N$.  By construction, we may view $\partial_+W$ as  obtained from $N$ via surgeries of codimension $2$.

Let $\interior{V}$ be the interior of $V$. Consider the cobordism $Z\coloneqq W - \interior{V}$ between $N$ and $\partial_- W$, where the map $\pi_1(N) \to \pi_1(Z)$ is an isomorphism.  As both $N$ and $Z$ are connected and $\pi_1(N) \to \pi_1(Z)$ is an  isomorphism, it follows that $Z$ is obtained from $N$ by attaching finitely many handles of index $\geq 2$. 

To summarize, we see that  $W$ is obtained from $(\partial_+W)\times I$ by attaching handles of index $\geq 2$. This finishes the proof.   
\end{proof}

The following theorem is a consequence of Lemma \ref{lm:surgery2}  and  \cite[Theorem 4.2]{MR4045309}.

\begin{theorem}\label{thm:realization}
	Let $\uspace_G$ be the space from Definition \ref{def:uspace}   equipped with the exhaustion $\{Y_i\}$. Then given any $c\in KO_n^{lf}(\uspace_G)$, there is an element $(M, f)\in \Omega^{spin, lf}_\ast(\uspace_G) $ such that $M$ is a noncompact spin manifolds and $f$ is a proper map from $M$ to $\uspace_G$ satisfying the following: 
\begin{enumerate}[$(1)$]
	\item $f_\ast[D_M] = c$;
	\item the inverse images $(M_i, \partial M_i) = f^{-1}(Y_i, \partial Y_i)$ are compact manifolds with boundary such that the induced maps $\pi_1(M_i)\to \pi_1(Y_i)$ and $\pi_1(\partial M_i) \to \pi_1(\partial Y_i)$ are all isomorphisms;
	\item the induced maps $\pi_1(A_i)\to \pi_1(Y_{i,i+1})$ are isomorphisms for all $i\geq 0$, where $A_i = M_{i+1}- \interior{M}_{i}$ is the annulus region between $\partial M_{i}$ and $\partial M_{i+1}$;
	\item each $A_i$  is obtained from $(\partial M_{i+1})\times I$ by attaching handles of index $\geq 2$. Equivalently, $\partial M_{i+1}$ is obtained from $\partial M_{i}$ via surgeries of codimension $\geq 2$. 
\end{enumerate}	
\end{theorem}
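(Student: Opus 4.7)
The plan is to reduce to \cite[Theorem 4.2]{MR4045309} for the initial realization and then apply Lemma \ref{lm:surgery2} inductively to upgrade the handle-attachment structure on the annular regions. First, I would invoke \cite[Theorem 4.2]{MR4045309} to obtain an initial pair $(M, f) \in \Omega_n^{spin, lf}(\uspace_G)$ with $f_\ast[D_M] = c$ satisfying property (2). Since each $Y_i$ is contractible, property (2) forces $\pi_1(M_i) = 1$ while $\pi_1(\partial M_i) \cong G_i$. Moreover $Y_{i,i+1}$ deformation retracts onto its inner boundary $\partial Y_i = BG_i$, so $\pi_1(Y_{i,i+1}) \cong G_i$, and the inclusion $\partial Y_{i+1} \hookrightarrow Y_{i,i+1}$ induces the surjection $\varphi_{i+1}$. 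A standard $\pi_1$-surgery performed in the interior of each $A_i = M_{i+1} - \interior{M}_i$ (feasible since $\dim A_i = n \geq 5$ and all groups involved are finitely presented) then lets us arrange property (3): we kill the kernel of $\pi_1(A_i) \to G_i$ by surgery on embedded circles, obtaining $\pi_1(A_i) \cong G_i$ with $\pi_1(\partial M_i) \to \pi_1(A_i)$ an isomorphism and $\pi_1(\partial M_{i+1}) \to \pi_1(A_i)$ the surjection $\varphi_{i+1}$.

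With properties (1)--(3) secured, I would proceed by induction on $i \geq 0$ to establish property (4) one annulus at a time. At stage $i$, the cobordism $A_i$ satisfies the hypotheses of Lemma \ref{lm:surgery2} with $\partial_- W = \partial M_i$ and $\partial_+ W = \partial M_{i+1}$, so Lemma \ref{lm:surgery2} produces a new spin cobordism $A_i'$ between the same boundaries and with the same $\pi_1$-properties, but now obtained from $\partial M_{i+1} \times I$ by attaching handles of index $\geq 2$. Since the surgeries used in the proof of Lemma \ref{lm:surgery2} take place in the interior of $A_i$, fix the boundary, and preserve $\pi_1(A_i) \cong G_i$, the classifying map $A_i \to Y_{i,i+1} \simeq BG_i$ extends over $A_i'$ unambiguously. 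Replacing every $A_i$ by its improved $A_i'$ yields a new pair $(M', f')$ satisfying (2), (3), and (4).

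The principal technical obstacle is to verify that property (1) survives these modifications, i.e.\ that $f'_\ast[D_{M'}] = c$ in $KO_n^{lf}(\uspace_G)$. Each interior surgery used in Lemma \ref{lm:surgery2} has as its trace an $(n+1)$-dimensional spin cobordism rel boundary; concatenating these traces within $A_i$ produces a compact spin bordism over $Y_{i+1}$ between the old and new manifolds that is localized in $A_i$. Since the inductive modifications occur in pairwise disjoint annular regions, the disjoint union of these compact bordisms --- together with product cobordisms on all unchanged pieces --- assembles into a bona fide locally finite spin bordism from $(M, f)$ to $(M', f')$ over $\uspace_G$. Hence $f'_\ast[D_{M'}] = f_\ast[D_M] = c$, completing the construction.
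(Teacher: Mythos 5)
Your proposal is correct and follows essentially the same route as the paper: realize $c$ via \cite[Theorem 4.2]{MR4045309} (which in fact already supplies property (3), so your extra $\pi_1$-surgery step is unnecessary) and then upgrade each annulus $A_i$ by Lemma \ref{lm:surgery2}. Your verification that property (1) survives the replacement --- via rel-boundary locally finite spin bordisms assembled from the traces of the surgeries --- is a detail the paper leaves implicit and is a worthwhile addition, with the minor caveat that in Lemma \ref{lm:surgery2} the modification is not purely by interior surgeries (it glues in an auxiliary cobordism $V$ and its reverse), though the resulting $A_i'$ is still spin bordant to $A_i$ rel boundary, so your argument goes through.
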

\begin{proof}
	By \cite[Theorem 4.2]{MR4045309},  for any $c\in KO_n^{lf}(\uspace_G)$, there is an element $(M, f)\in \Omega^{spin, lf}_\ast(B_G) $ such that $M$ is a noncompact spin manifold and $f$ is a proper map from $M$ to $\uspace_G$ satisfying the following: 
	\begin{enumerate}[(a)]
		\item $f_\ast[D_M] = c$, where $D_M$ is the Dirac operator on $M$;
		\item the inverse images $(M_i, \partial M_i) = f^{-1}(Y_i, \partial Y_i)$ are compact manifolds with boundary such that the induced maps $\pi_1(M_i)\to \pi_1(Y_i)$ and $\pi_1(\partial M_i) \to \pi_1(\partial Y_i)$ are all isomorphisms;
		\item the induced maps $\pi_1(A_i)\to \pi_1(Y_{i,i+1})$ are  isomorphisms for all $i\geq 0$, where $A_i = M_{i+1}- \interior{M}_{i}$ is the annulus region between $\partial M_{i}$ and $\partial M_{i+1}$.
	\end{enumerate}
 By Lemma \ref{lm:surgery2}, these $A_i$'s   satisfy condition (4). This finishes the proof. 
\end{proof}

We will also need an extension  result for Riemannian metrics on certain types of cobordims (Proposition \ref{prop:ext}).  First let us recall the following extension lemma of Shi, Wang and Wei \cite[Lemma 2.1]{shi-wang-wei}.

\begin{lemma}[{\cite[Lemma 2.1]{shi-wang-wei}}]\label{lm:sww}
		Let $\Sigma$ be a closed smooth manifold. Suppose $h_1$ and $h_0$ are two smooth Riemannian metrics on $\Sigma$ such that $h_1 < h_0$. Then for any constant $k>0$ and $m\in \mathbb R$, there exists a smooth Riemannian metric $g$ on the cylinder $\Sigma\times [0, 1]$ such that 
	\begin{enumerate}[$(1)$]
		\item $g$ extends $h_0$ and $h_1$, that is, \[ g|_{\Sigma\times \{0\}}  = h_0 \textup{ and } g|_{\Sigma\times \{1\}}  = h_1,  \]
		\item $\Sc(g) \geq k$,
	    \item the mean curvature at $0$-end of the cylinder is bounded below by $m$, that is, 
	    \[  H_{g}(\Sigma\times \{0\})_x\geq m  \]
	    for all $x\in  \Sigma\times \{0\}$.
	\end{enumerate}
\end{lemma}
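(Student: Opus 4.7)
The plan is to construct $g$ on $\Sigma\times[0,1]$ as a warped product, using the strict inequality $h_1 < h_0$ to force the slices to shrink in $t$, and then tuning two profile functions to meet the mean-curvature and scalar-curvature conditions simultaneously.

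Since $h_1 < h_0$ pointwise as symmetric bilinear forms on the compact manifold $\Sigma$, the linear interpolation $h(\tau) = (1-\tau)h_0 + \tau h_1$, $\tau\in[0,1]$, is a smooth family of Riemannian metrics with $\dot h(\tau)=h_1-h_0$ uniformly negative-definite. For the auxiliary cylinder metric $\bar g = dt^2 + h(t)$, the second fundamental form of $\Sigma\times\{0\}$ with respect to the outward unit normal $-\partial_t$ equals $\tfrac12(h_0-h_1)$, which is positive-definite, so the outward mean curvature at the zero-end is strictly positive. After a smooth reparametrization $\tau\colon[0,1]\to[0,1]$ with $\tau(0)=0$ and $\tau(1)=1$, the metric $g_0 = dt^2 + h(\tau(t))$ still restricts to $h_0$ and $h_1$ at the two boundaries, and the outward mean curvature at $t=0$ equals $\tau'(0)\cdot\tfrac12\operatorname{tr}_{h_0}(h_0-h_1)$; choosing $\tau'(0)$ large enough then forces $H_{g_0}(\Sigma\times\{0\})\geq m$ pointwise, so conditions (1) and (3) of the lemma are met.

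To enforce (2), replace $g_0$ by the warped variant $g = \phi(t)^2\,dt^2 + h(\tau(t))$ with $\phi\equiv 1$ on neighborhoods of $\{0,1\}$. The standard scalar-curvature formula for such a warped product contains a boost term $-2(n-1)\phi''/\phi$ with $n = \dim\Sigma+1$, which can be made arbitrarily large and positive by choosing $\phi$ concave on an interior subinterval. The remaining contributions---namely the intrinsic $\Sc(h(\tau(t)))$, the derivative penalty $-(n-1)(n-2)(\phi'/\phi)^2$, and the mixed terms coming from $\dot h$ and $\ddot h$---are uniformly bounded on the compact manifold $\Sigma\times[0,1]$, so they can be dominated by the boost term. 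Because the modification occurs only in the interior, conditions (1) and (3) are preserved.

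The main obstacle is balancing the concavity of $\phi$ (which generates the scalar-curvature boost) against the boundary constraint $\phi\equiv 1$ near $\{0,1\}$: any interior concave bump in $\phi$ must be matched by convex shoulders on either side, and these contribute a term $-2(n-1)\phi''/\phi$ of the wrong sign. The resolution is to decompose the interior into short subintervals on each of which a suitably shaped bump produces a large positive \emph{net} contribution to $\Sc(g)$; the one-dimensional ODE analysis that determines the required number, amplitude, and width of the bumps in terms of $k$ and the pre-existing bounds on $\Sc(h(\tau))$ is elementary. An alternative route avoiding the warped ansatz altogether is a compactly supported conformal rescaling $e^{2u}g_0$ with $u=u(t)$ of sufficiently concave profile, which by the conformal transformation law of the scalar curvature likewise produces a $-2(n-1)\Delta u$ boost while preserving the prescribed boundary data.
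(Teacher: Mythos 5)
The paper itself gives no proof of this lemma; it is quoted from Shi--Wang--Wei \cite[Lemma 2.1]{shi-wang-wei}, so your attempt has to stand on its own, and it does not: the gap sits exactly where the lemma is hard. The ``boost term'' you invoke does not exist for your ansatz. For $g=\phi(t)^2\,dt^2+h(\tau(t))$ the function $\phi$ only reparametrizes the interval: substituting $s=\int_0^t\phi$, the metric is isometric to $ds^2+h(\sigma(s))$, so its scalar curvature contains no $-2(n-1)\phi''/\phi$ term. That term belongs to the genuinely warped metric $dt^2+\phi(t)^2h$, a different ansatz in which $\phi$ rescales the slice metrics. For metrics of the form $ds^2+h_s$ the correct formula is $\Sc(g)=\Sc(h_s)-2\partial_sH-|A|^2-H^2$ with $A=\tfrac12\partial_sh_s$ and $H=\operatorname{tr}_{h_s}A$; the only available source of positivity is $-2\partial_sH$, and it competes with $-|A|^2-H^2$ at the same order (all are quadratic in the speed at which the path from $h_0$ to $h_1$ is traversed), so the claim that ``the remaining contributions are uniformly bounded'' and can be dominated is false in the relevant regime --- arranging that dominance pointwise is the entire content of the lemma.

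Moreover, even granting an interior boost, your modification is supported away from $\Sigma\times\{0,1\}$, so near the $0$-end the metric is exactly $g_0=dt^2+h(\tau(t))$. There you made $\tau'(0)$ large to achieve $H\geq m$, and then $-H^2-|A|^2\sim-\tau'(0)^2$: with $\tau$ essentially linear near $0$ one gets $\Sc(g_0)\approx \Sc(h_0)+\tau'(0)^2\bigl[\tfrac34|h_1-h_0|^2_{h_0}-\tfrac14(\operatorname{tr}_{h_0}(h_1-h_0))^2\bigr]$, which tends to $-\infty$ already when $h_1$ is a constant conformal shrink of $h_0$ and $\dim\Sigma\geq 4$. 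No bump placed in the interior can repair this: conditions $(2)$ and $(3)$ interact at the boundary and cannot be decoupled by your two-step scheme (first fix $(1)$ and $(3)$ near the ends, then fix $(2)$ inside). The conformal variant $e^{2u(t)}g_0$ with $u$ vanishing at the ends runs into the same quadratic competition through the $(u')^2$ penalty and the unchanged boundary region. A working construction, essentially that of Shi--Wang--Wei, takes the thin cylinder $g=\varepsilon^2dt^2+h_{\zeta(t)}$ with $h_\zeta=(1-\zeta)h_0+\zeta h_1$ and a convex, log-type reparametrization $\zeta$ chosen so that $a_0\zeta''-B(\zeta')^2\geq\delta>0$, where $a_0$ is a uniform lower bound for $\operatorname{tr}_{h_\zeta}(h_0-h_1)$ and $B$ bounds the quadratic terms; this makes $-2\partial_tH$ dominate $|A|^2+H^2$ pointwise, including at $t=0$, and letting $\varepsilon\to0$ then achieves $\Sc\geq k$ and $H\geq m$ simultaneously.
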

Here the mean curvature $H_{g}(\Sigma\times \{0\})_x$ is calculated with respect to the outer normal vector. In particular, our convention is that the mean curvature of the standard $n$-dimensional sphere is positive when viewed as the boundary of the standard $(n+1)$-dimensional Euclidean ball.  

Combining the above extension lemma of Shi, Wang and Wei with  the surgery theory  for positive scalar curvature metrics  of Gromov-Lawson  \cite{MGBL80b} and Schoen-Yau \cite{RSSY79b}, we have the following proposition. 

\begin{proposition}\label{prop:ext}
	Let $Z$ be a cobordism between two closed smooth  manifolds $\Sigma_1$ and $\Sigma_2$ such that $Z$  is obtained from $\Sigma_2\times I$ by attaching handles of index $\geq 2$. Given any smooth Riemannian metric $h$ on $\Sigma_1$, for any constants $k>0$ and $m \in \mathbb R$, there exists a smooth Riemannian metric $g$ on $Z$ such that 
	\begin{enumerate}[$(1)$]
		\item $g$ extends $h$, that is, \[ g|_{\Sigma_1} = h  \]
		\item $\Sc(g) \geq k$,
		\item $ H_{g}(\Sigma_1)_x\geq m$, 
		for all $x\in  \Sigma_1$.
	\end{enumerate}
\end{proposition}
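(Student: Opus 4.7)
The plan is a two-stage construction. First, I would use Lemma~\ref{lm:sww} (Shi--Wang--Wei) to build a PSC collar near $\partial_-Z$ that realises $h$ and achieves the mean-curvature bound, ending in a small rescaling of $h$. Second, I would extend this small metric across the remainder of $Z$ via a Gromov--Lawson--Gajer-type handle construction. Finally, I would invoke Miao's gluing lemma, exactly as in the introduction, to smooth the $C^0$ interface into a $C^\infty$ PSC metric $g$ satisfying~$(1)$--$(3)$.

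Concretely, set $\Sigma := \partial_-Z$ and fix a small parameter $\lambda \in (0,1)$ to be chosen later. Since $\lambda^2 h < h$ as symmetric tensors, Lemma~\ref{lm:sww} yields a smooth metric $g_1$ on $\Sigma \times [0,1]$ with $g_1|_{\Sigma \times \{0\}} = h$, $g_1|_{\Sigma \times \{1\}} = \lambda^2 h$, $\Sc(g_1) \geq k$, and $H_{g_1}(\Sigma \times \{0\}) \geq m$. Identify $\Sigma \times [0,1]$ with a collar of $\partial_-Z$ in $Z$, and let $W := \overline{Z \setminus (\Sigma \times [0,1))}$. Then $W$ inherits the handle structure of $Z$ (built from $\partial_+Z \times [0,1]$ by attaching handles of index $\geq 2$), and it suffices to produce a smooth metric $g_2$ on $W$ with $g_2|_\Sigma = \lambda^2 h$ and $\Sc(g_2) \geq k$: the union $g_1 \cup g_2$ is then a continuous metric of scalar curvature $\geq k$ away from the interface, and Miao's gluing lemma delivers the required smooth $g$. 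To build $g_2$ I would begin from the $\partial_+Z$ side (where no boundary condition is imposed) by equipping $\partial_+Z \times [0,T]$ with a warped product $dt^2 + f(t)^2 \gamma$, for any auxiliary metric $\gamma$ on $\partial_+Z$ and a concave $f$ shrinking fast enough that the $-2n f''/f$ term forces $\Sc \geq k$. Then each handle of index $\geq 2$ is attached in turn using a standard PSC handle model (a product of torpedo-type metrics on the $D^k$ and $D^{n+1-k}$ factors) together with the Gromov--Lawson--Gajer surgery construction; choosing $\lambda$ sufficiently small at the end supplies the room needed to match $\lambda^2 h$ on $\Sigma$.

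The main obstacle I expect is the range of handle indices for which classical PSC surgery preservation fails. An index-$k$ handle has codimension $n-k+1$ from the $\partial_+Z$ side and codimension $k$ from the dual $\partial_-Z$ side; the Gromov--Lawson theorem requires codimension $\geq 3$, so handles of index $\leq n-2$ are handled directly while those of index $\geq 3$ are handled dually, and together these cover all $k \geq 2$ as soon as $n \geq 4$. The delicate case is that of index-$2$ handles in low-dimensional cobordisms, where neither direction affords codimension $\geq 3$. Here the complete freedom on the $\partial_+Z$ side is essential: one pinches a neighborhood of each attaching circle via a warped product and interleaves applications of Lemma~\ref{lm:sww} between successive handle attachments, which lets each such handle be realised while keeping $\Sc \geq k$ throughout. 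This refinement, together with the $C^0$-to-$C^\infty$ smoothing supplied by Miao's gluing lemma, completes the construction.
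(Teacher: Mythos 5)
Your first step (the Shi--Wang--Wei collar on $\Sigma_1\times[0,1]$ from $h$ down to a smaller metric, which secures conditions $(1)$ and $(3)$) is exactly the paper's first step. The gap is in how you cross the handles. You propose to build a PSC metric $g_2$ on the handle part starting from the $\Sigma_2$ side and then glue it to the collar along $\Sigma_1$ via Miao's lemma. But Miao's lemma requires the induced boundary metrics of the two pieces to be \emph{isometric}, and the Gromov--Lawson--Gajer construction launched from an auxiliary metric $\gamma$ on $\Sigma_2$ terminates at $\Sigma_1$ with whatever metric the surgery procedure produces there --- a metric with no relation to $\lambda^2 h$. ``Choosing $\lambda$ sufficiently small'' supplies no mechanism for matching two unrelated metrics; indeed $\lambda^2 h$ is not even of positive scalar curvature when $h$ is not (rescaling multiplies $\Sc$ by $\lambda^{-2}$), so it cannot arise as the output of a GL-type construction, nor can you run the construction dually from the $\Sigma_1$ end (which in any case only has codimension $\geq 2$ there). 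This matching problem is the actual content of the proposition, and your proposal asserts it away. Secondary issues: Lemma~\ref{lm:sww} gives no control on the mean curvature at the $\Sigma\times\{1\}$ end, so Miao's hypothesis $H_{g_1}+H_{g_2}>0$ at the interface is also unverified; and the ``interleaving'' treatment of index-$2$ handles in low dimensions is not an argument.

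The paper's device for circumventing all of this is a doubling trick. Form $W=Z\cup_{\Sigma_2}(-Z)$, a cobordism from $\Sigma_1$ to $\Sigma_1$. Because $Z$ is built from $\Sigma_2\times I$ by handles of index $q\geq 2$, the double $W$ is diffeomorphic to $\Sigma_1\times I$ with \emph{interior} surgeries performed on embedded spheres of codimension $q+1\geq 3$. One may therefore apply Gromov--Lawson/Schoen--Yau surgery directly to the Shi--Wang--Wei metric $g_0$ on $\Sigma_1\times I$ (taken with $\Sc\geq k+1$ to absorb the $\varepsilon$-loss): the surgeries take place away from $\partial(\Sigma_1\times I)$, so the resulting metric $g_1$ on $W$ still restricts to $h$ on $\Sigma_1\times\{0\}$ and retains the mean curvature bound there. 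Restricting $g_1$ to the half $Z\subset W$ finishes the proof --- no gluing and no boundary matching at $\Sigma_2$ is ever needed, since the proposition imposes no condition at $\Sigma_2$. To repair your argument you would need either this doubling step or some substitute for the unjustified matching at $\Sigma_1$.
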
 
\begin{proof}
 Consider the cylinder $\Sigma_1\times I$. Let us equip $\Sigma_1\times \{0\}$ with the metric $h$, and $\Sigma_1\times \{1\}$ with any Riemannian metric $h_1$ such that $h_1<h$ (e.g. $h_1 = \frac{h}{2})$.  Let $g_0$ be a Riemannian metric on $\Sigma_1\times I$ delivered by  Lemma \ref{lm:sww} for the constants $(k+1)$ and $m$.
 
 Now let us consider the cobordism 
 \[ W = Z \cup_{\Sigma_2} (-Z)\]
 obtained by gluing $Z$ with its opposite $-Z$ along the boundary component $\Sigma_2$. 
 \begin{claim}\label{claim:codimensionthree}
	$W$ is obtained from the cylinder $\Sigma_1\times I$ via surgeries of codimension $\geq 3$.
 \end{claim}	
 Indeed, the space $Z\times [0, 1]$ is  a cobordism between 
 \[ Z\times \{0\}\cup \Sigma_{2}\times [0, 1] \cup Z\times \{1\} \cong W  \textup{ and } \Sigma_1\times [{\scriptstyle  -\frac{1}{2}}, { \scriptstyle \frac{1}{2}}] \cong \Sigma_1\times I. \]  Since $Z$ is obtained from $\Sigma_2\times I$ by attaching handles of index $\geq 2$, it is not difficult to see that $W$ is obtained from the cylinder $\Sigma_1\times I$ via surgeries of codimension $\geq 3$. More precisely,  recall that attaching a handle of index $(p+1)$ to $\Sigma_1\times I$ is given by 
 \[ (\Sigma_1\times I) \cup_{\sphere^p \times \disk^{q}} (\disk^{p+1}\times \disk^q). \]
 where $\sphere^p$ is a $p$-dimensional sphere in $\Sigma_1\times \{0\} = \Sigma_1$ and $\sphere^p \times \disk^{q}$ is a tubular neighborhood of $\sphere^p$ in $\Sigma_1$. For brevity, let us denote the resulting space from the above handle attaching construction by $Y$. Note that $Y$ has two boundary components: one of them is $\Sigma_1$ and the other one is denoted by $\Sigma'$.   Let $Y\cup_{\Sigma'} Y$ be the space obtained from two copies of $Y$ glued  along $\Sigma'$. We claim that $Y\cup_{\Sigma'} Y$ is obtained from  $(\Sigma_1\times I)\cup_{(\Sigma_1\times \{0\}) } (\Sigma_1\times I)$ by performing a codimension  $(q+1)$ surgery. Indeed, let  $\sphere^p$ be the $p$ dimensional sphere in $\Sigma_1\times \{0\}$ from above. Its tubular neighborhood in $(\Sigma_1\times I)\cup_{(\Sigma_1\times \{0\}) } (\Sigma_1\times I)$ is  $\sphere^p\times \disk^{q+1}$. It is not difficult to see that the space $Y\cup_{\Sigma'} Y$ is obtained from $(\Sigma_1\times I)\cup_{(\Sigma_1\times \{0\}) } (\Sigma_1\times I)$
 by removing the interior of $\sphere^p\times \disk^{q+1}$ and gluing a copy of 
 \[ \disk^{p+1} \times (\disk^q\cup_{\partial \disk^q } \disk^q )  \cong \disk^{p+1} \times \sphere^q\]
 to $\sphere^p\times \sphere^{q} = \partial (\sphere^p\times \disk^{q+1})$. By assumption $Z$ is obtained from $\Sigma_2\times I$ by attaching handles of index $\geq 2$, or equivalently $Z$ is obtained from $\Sigma_1\times I$ by attaching handles of index $\leq (\dim Z - 2)$. It follows from the above discussion that $W$ is obtained from $ (\Sigma_1\times I)\cup_{(\Sigma_1\times \{0\}) } (\Sigma_1\times I) \cong \Sigma_1 \times I$ via surgeries of codimension $\geq 3$. This proves the claim.

 Now by surgery theory  for positive scalar curvature metrics  of Gromov-Lawson  \cite{MGBL80b} and Schoen-Yau \cite{RSSY79b}, it follows that there exists a positive scalar curvature metric $g_1$ on $W$. Furthermore, the construction of $g_1$ on $W$ (as in  \cite{MGBL80b}) in fact shows that for any $\varepsilon >0$, there exists a Riemannian metric $\bar{g}$ on $W$ such that $\Sc(\bar{g}) \geq k+1-\varepsilon$. Therefore, without loss of generality, let us assume $\Sc(g_1)\geq k$. Moreover, as all surgeries are performed away from the boundary, $g_1$ and $g_0$ coincide near the boundary. In particular, we still have 
 \[   g_1|_{\Sigma_1\times \{0\}} = h \textup{ and }  H_{g_1}(\Sigma_1\times \{0\})_x\geq m, \textup{ 
 for all } x\in  \Sigma_1.\] 
Let $g = g_1|_Z$ be the restriction of the Riemannian metric $g_1$ on  $Z$. Then $g$ satisfies all the required properties. This finishes the proof. 
	
\end{proof}	

\begin{remark}
	Note that in Proposition \ref{prop:ext} above, we do not have too much control of the Riemannian metric at $\Sigma_2$, neither the mean curvature of $\Sigma_2$. 
\end{remark}

Another ingredient needed for the proof of our main theorem is the following gluing lemma of Miao \cite[Section 3]{MR1982695}. See also \cite[Section 11.5]{MR3816521}. 

\begin{lemma}\label{lm:glue}
	Let $(X_1, g_1)$ and $(X_2, g_2)$ be two Riemannian manifolds with compact boundary. Suppose $\varphi\colon \partial X_1 \to \partial X_2$ is an isometry with respect to the induced metrics on $X_1$ and $X_2$. If 
	\[  H_{g_1}(\partial X_1)_x + H_{g_2}(\partial X_2)_{\varphi(x)} > 0\]
	for all $x\in \partial X_1$, then the natural continuous Riemannian metric $g_1\cup g_2$ on $X_1\cup_\varphi X_2$ can be approximated by smooth Riemannian metrics $g_{\varepsilon}$ with their scalar curvatures bounded from below by the scalar curvature\footnote{For $x \in \partial X_1 \cong \partial X_2$, this means $\Sc(g_\varepsilon)_x \geq \max \{\Sc(g_1)_x, \Sc(g_2)_{\varphi(x)}\}$. } of  $g_1\cup g_2$. Furthermore, for any $\varepsilon>0$,  $g_\varepsilon$ can be chosen to coincide with $g_1\cup g_2$ away from $\varepsilon$-neighborhood of $\partial X_1$ in $X_1\cup_\varphi X_2$. 
\end{lemma}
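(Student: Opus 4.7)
The plan is to follow Miao's original approach: first produce a continuous metric on $X_1\cup_\varphi X_2$ that is smooth away from the interface hypersurface $\Sigma = \partial X_1 \cong \partial X_2$, then mollify normal to $\Sigma$, and finally absorb the resulting error in scalar curvature by a small conformal change. The positivity assumption on the sum of mean curvatures is exactly what makes the conformal correction feasible.

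First I would fix bicollar Fermi coordinates $(x,t)$, $x\in\Sigma$, $t\in(-\delta,\delta)$ on a neighborhood $U$ of $\Sigma$ inside $X_1\cup_\varphi X_2$, with $t<0$ on the $X_1$ side and $t>0$ on the $X_2$ side. In these coordinates the glued metric takes the form $g = dt^2 + h(x,t)$, where $h(x,\cdot)$ is a path of metrics on $\Sigma$ which is Lipschitz in $t$, smooth on each half, and has a jump discontinuity $\partial_t h|_{t=0^+} - \partial_t h|_{t=0^-}$ related to the second fundamental forms $A_1$ and $A_2$ of $\Sigma$ inside $(X_1,g_1)$ and $(X_2,g_2)$. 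The hypothesis translates into $\operatorname{tr}_{h(\cdot,0)}(A_1 + A_2)>0$ pointwise, i.e.\ the jump term has strictly positive trace.

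Next I would mollify $g$ normal to $\Sigma$. Choose a standard nonnegative mollifier $\eta_\varepsilon(t)$ with support in $[-\varepsilon,\varepsilon]$, replace $h(x,t)$ by $h_\varepsilon(x,t) = (\eta_\varepsilon *_t h)(x,t)$ for $|t|<2\varepsilon$, and interpolate smoothly to $h$ outside $[-2\varepsilon,2\varepsilon]$, keeping the metric unchanged outside a $3\varepsilon$-neighborhood of $\Sigma$. Call the resulting smooth metric $\tilde g_\varepsilon$. Using the formula for scalar curvature in Fermi coordinates, $\operatorname{Sc}(\tilde g_\varepsilon)$ picks up a term of the form $-\operatorname{tr}_{h_\varepsilon}\partial_t^2 h_\varepsilon$ plus lower order terms uniformly bounded in $\varepsilon$. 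In the distributional limit $\varepsilon\to 0$, this term converges to $\operatorname{Sc}(g_1\cup g_2)$ away from $\Sigma$ plus a positive measure concentrated on $\Sigma$ of density $2\operatorname{tr}(A_1+A_2)$; for each fixed $\varepsilon>0$ the error is a function supported in $[-2\varepsilon,2\varepsilon]$ whose integral against any smooth test function is close to this positive measure.

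Finally, to get a pointwise rather than distributional lower bound, I would perform a small conformal deformation $g_\varepsilon = u_\varepsilon^{4/(n-2)}\tilde g_\varepsilon$ solving (or sub-solving) the Yamabe-type equation
\begin{equation*}
-\tfrac{4(n-1)}{n-2}\Delta_{\tilde g_\varepsilon} u_\varepsilon + \operatorname{Sc}(\tilde g_\varepsilon)\, u_\varepsilon \;\geq\; \operatorname{Sc}(g_1\cup g_2)\cdot u_\varepsilon^{(n+2)/(n-2)},
\end{equation*}
with $u_\varepsilon \to 1$ as $\varepsilon\to 0$ and $u_\varepsilon = 1$ outside a $3\varepsilon$-neighborhood of $\Sigma$. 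The main obstacle, and the only place the hypothesis is used, is producing $u_\varepsilon$: the negative mass of $\operatorname{Sc}(\tilde g_\varepsilon)$ on $[-2\varepsilon,2\varepsilon]$ stays bounded as $\varepsilon\to 0$, while the positive-measure contribution from the jump dominates, so a standard variational / sub-solution argument (as in Miao) produces $u_\varepsilon>0$ with $\|u_\varepsilon-1\|_{C^0}\to 0$. Setting $g_\varepsilon = u_\varepsilon^{4/(n-2)}\tilde g_\varepsilon$ gives smooth metrics agreeing with $g_1\cup g_2$ outside the $\varepsilon$-neighborhood of $\Sigma$ (after relabeling), with $\operatorname{Sc}(g_\varepsilon)\geq \operatorname{Sc}(g_1\cup g_2)$, which is the desired conclusion.
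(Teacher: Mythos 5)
The paper does not prove this lemma; it is quoted directly from Miao \cite[Section 3]{MR1982695}, and your proposal is a correct reconstruction of exactly that argument: mollify the glued metric in the normal (Fermi) direction, observe that the distributional scalar curvature acquires a nonnegative singular part along the interface precisely because $H_{g_1}(\partial X_1)+H_{g_2}(\partial X_2)>0$, and repair the pointwise lower bound by a conformal factor close to $1$. The one point to be careful about is the final localization claim: an exact solution of the linear conformal equation cannot be identically $1$ on an open set (unique continuation), so to make $g_\varepsilon$ literally coincide with $g_1\cup g_2$ outside the $\varepsilon$-neighborhood you must, as you indicate parenthetically, work with a compactly supported supersolution of the differential inequality rather than an exact solution.
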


We now combine the above ingredients to give a negative answer to Gromov's compactness question when interpreted in its complete generality.

\begin{theorem}\label{thm:main}
	There exists a noncompact smooth spin manifold $M$ such that for any given compact subset $V\subset M$ and any $\rho>0$, there exists a (non-complete) Riemannian metric on $X$ with scalar curvature $\geq 1$ and the closed $\rho$-neighborhood $N_\rho(V)$ of $V$ in $M$ is compact, but $M$ itself does not admit a complete Riemannian metric with uniformly positive scalar curvature. 
\end{theorem}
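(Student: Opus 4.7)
The plan is to follow the blueprint spelled out in the introduction. Fix the inverse system \eqref{eq:sys} with $G_i = \mathbb Z^2 \ast F_i$ and associated classifying space $\uspace_G$ exhausted by $\{Y_i\}$, where each $Y_i$ is contractible with $\partial Y_i = BG_i$. By Proposition~\ref{prop:nonzero}, $\sideset{}{^1}\varprojlim KO_3(Y_j,\partial Y_j) \neq 0$. Using Bott periodicity I would pick $n \equiv 2 \pmod 8$ with $n \geq 6$ and choose a class $c \in KO^{lf}_n(\uspace_G)$ whose image in $\varprojlim KO_n(Y_j,\partial Y_j)$ vanishes while its Milnor-sequence representative in $\sideset{}{^1}\varprojlim KO_{n+1}(Y_j,\partial Y_j)$ is nonzero. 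Crucially, I would verify that the image of $c$ under the natural map $KO^{lf}_n(\uspace_G)\to KO_n(\ualg(\uspace_G))$ retains nonzero projection in $\sideset{}{^1}\varprojlim KO_{n+1}(C^\ast_{\max}(\Gamma,G_i))$; this uses that the assembly map sends the fundamental class of $B\mathbb Z^2 \subset BG_i$ nontrivially to $KO_2(C^\ast_{\max}(\mathbb Z^2))$, so the $\times 9$ multiplication on $H_2(BG_i) \cong \mathbb Z$ exhibited in Proposition~\ref{prop:nonzero} transfers to a $\times 9$ multiplication on the relevant $K$-theory groups, preserving the $\widehat{\mathbb Z}_3/\mathbb Z$ summand in the cokernel.

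Next, apply Theorem~\ref{thm:realization} to realize $c = f_\ast[D_M]$ by a noncompact spin manifold $M$ with proper map $f\colon M\to \uspace_G$ satisfying properties $(1)$--$(4)$ of that theorem. In particular $\pi_1(M_i)\cong \pi_1(Y_i)=\{e\}$, $\pi_1(\partial M_i)\cong G_i=\mathbb Z^2\ast F_i$, and each annulus $A_j = M_{j+1}\setminus \interior{M}_{j}$ is built from $(\partial M_{j+1})\times I$ by attaching handles of index $\geq 2$. Since the homological image of $c$ in $\varprojlim KO_n(Y_j,\partial Y_j)$ vanishes, naturality (the $\pi_1$-isomorphism in property $(2)$ identifies the relevant relative $K$-groups) forces the relative higher index of $D_{M_i}$ to vanish in $KO_n(\{e\},\mathbb Z^2\ast F_i)$ for every $i$. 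By the special case of the unstable relative Gromov--Lawson--Rosenberg conjecture valid for pairs $(\{e\},\mathbb Z^k\ast F)$ with $F$ finitely generated free (see \cite[Corollary~2.24]{MR4045309}), each compact $(M_i,\partial M_i)$ therefore carries a metric $g_i$ with $\Sc(g_i)\geq 1$ that is collared near $\partial M_i$.

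For the first half of the theorem, given compact $V\subset M$ and $\rho>0$, I would choose $i$ with $V\subset \interior{M}_{i}$ and stretch the product collar of $g_i$ near $\partial M_i$ until $N_\rho(V)\subset M_i$, which leaves the scalar curvature unchanged. Then inductively for $j\geq i$: property $(4)$ allows Proposition~\ref{prop:ext-intro} to extend $g_{i\to j}\big|_{\partial M_j}$ across $A_j$ with $\Sc\geq 1$ and with the mean curvature of $\partial M_j$ (with respect to the new piece) pushed large enough to dominate $-H_{g_{i\to j}}(\partial M_j)$; Miao's gluing lemma (Lemma~\ref{lm:glue}) then smooths the union across $\partial M_j$ while preserving $\Sc\geq 1$. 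Iterating produces a (generally incomplete) smooth metric on $M$ with $\Sc\geq 1$ extending $g_i$ on $M_i$, yielding the first half of the conclusion.

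Finally, for the nonexistence of a complete uniformly positive scalar curvature metric, suppose to the contrary that one exists. Then by \cite[Theorem~3.3]{MR4045309} one has $\sigma([D_M,\mathrm{id}_M])=0$ in $KO_n(\ualg(M))$, hence $\sigma(c) = f_\ast\sigma([D_M,\mathrm{id}_M]) = 0$ in $KO_n(\ualg(\uspace_G))$. But by construction and Definition~\ref{def:limone}, $\sigma(c)$ corresponds under the analytic Milnor sequence \eqref{eq:milnor} to a nonzero element of $\sideset{}{^1}\varprojlim KO_{n+1}(C^\ast_{\max}(\Gamma,G_i))$, a contradiction. The main obstacle I foresee is the first paragraph: propagating the nontrivial homological $\sideset{}{^1}\varprojlim$ class through the assembly map to ensure the analytic $\sideset{}{^1}\varprojlim$ is also nonzero on $c$. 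Once this index-theoretic nonvanishing is secured, the remainder assembles routinely from Theorem~\ref{thm:realization}, the relative Gromov--Lawson--Rosenberg input, Proposition~\ref{prop:ext-intro}, and Miao's gluing lemma.
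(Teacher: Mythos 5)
Your proposal is correct and follows essentially the same route as the paper: realize a nonzero $\sideset{}{^1}\varprojlim$ class via Theorem \ref{thm:realization}, produce collared positive scalar curvature metrics on the $M_i$ from the relative Gromov--Lawson--Rosenberg input, extend them by Proposition \ref{prop:ext} together with Miao's gluing lemma, and obstruct a complete uniformly positive scalar curvature metric by the index $\sigma$. The ``main obstacle'' you flag is resolved in the paper exactly along the lines you suggest: since each $Y_i$ is contractible and $G_i=\mathbb Z^2\ast F_i$ satisfies the (relative) Baum--Connes isomorphism, all vertical assembly maps between the homological and analytic Milnor sequences are isomorphisms, so $\sigma$ is an isomorphism and $\sigma(c)\neq 0$.
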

\begin{proof}
	Let $\{G_i, \varphi_i\}$ be the inverse system from \eqref{eq:sys}. Let $\uspace_G$ be the corresponding space equipped with the exhaustion $\{Y_i\}$ as before. 
	
	By  Proposition \ref{prop:nonzero}, $\sideset{}{^1}\varprojlim KO_3(Y_j, \partial Y_j)$ is nontrivial. Let $c$ be a nonzero element in  $\sideset{}{^1}\varprojlim KO_3(Y_j, \partial Y_j)\subset  KO^{lf}_3 (\uspace_G)$. By Theorem \ref{thm:realization},  there is an element $(M, f)\in \Omega^{spin, lf}_\ast(\uspace_G) $ such that $M$ is a noncompact spin manifolds of dimension $\geq 6$ and $f$ is a proper map from $M$ to $\uspace_G$ satisfying the following: 
	\begin{enumerate}[$(1)$]
			\item $f_\ast[D_M] = c$;
		\item the inverse images $(M_i, \partial M_i) = f^{-1}(Y_i, \partial Y_i)$ are compact manifolds with boundary such that the induced maps $\pi_1(M_i)\to \pi_1(Y_i)$ and $\pi_1(\partial M_i) \to \pi_1(\partial Y_i)$ are all isomorphisms;
		\item the induced maps $\pi_1(A_i)\to \pi_1(Y_{i,i+1})$ are  isomorphisms for all $i\geq 0$, where $A_i = M_{i+1}- \interior{M}_{i}$ is the annulus region between $\partial M_{i}$ and $\partial M_{i+1}$;
		\item each $A_i$  is obtained from $(\partial M_{i+1})\times I$ by attaching handles of index $\geq 2$, or  equivalently $\partial M_{i+1}$ is obtained from $\partial M_{i}$ via surgeries of codimension $\geq 2$.
	\end{enumerate}	

We have the following commutative diagram (cf. \cite{Guentner-Yu}): 
\[  \begin{tikzcd}[column sep=1.5em]
	0 \arrow[r] &  \sideset{}{^1}\varprojlim KO_{n+1}(M_i, \partial M_i)  \arrow[r] \arrow[d] &  KO_n^{lf} (M)  \arrow[r] \arrow[d]& \varprojlim KO_n(M_i, \partial M_i) \arrow[r]  \arrow[d] &  0 \\
	0 \arrow[r] &  \sideset{}{^1}\varprojlim KO_{n+1}(Y_i, \partial Y_i)  \arrow[r] \arrow[d, "\alpha"] &  KO_n^{lf} (\uspace_G)  \arrow[r] \arrow[d, "\sigma"]& \varprojlim KO_n(Y_i, \partial Y_i) \arrow[r]  \arrow[d, "\beta"] &  0 \\
	0 \arrow[r] &  \sideset{}{^1}\varprojlim KO_{n+1}(D_i^\ast) \arrow[r] &  KO_n (\ualg(\uspace_G))  \arrow[r]& \varprojlim KO_n(D_i^\ast) \arrow[r]  &  0 
\end{tikzcd} \]
where  $\ualg(\uspace_G)$ is defined as in  Definition \ref{def:ualg} and 	\[ D_i^\ast \coloneqq \varinjlim_{j>i} C^\ast_{\max}(\pi_1(Y_j), \pi_1(Y_{ij})) \otimes \mathcal K \]   
with $Y_{ij} = Y_j - \interior{Y}_i$. By construction, $Y_{ij}$ deformation retracts to $\partial Y_i = BG_i$ for all $j>i$. By the homotopy invariance of the fundamental group, it follows that 
\[ D_i^\ast \coloneqq  C^\ast_{\max}(\pi_1(Y_i), \pi_1(\partial Y_{i})) \otimes \mathcal K. \] 
By construction, each $Y_i$ is contractible. Moreover,  $\partial Y_i = BG_i$ with $G_i = \mathbb Z^2\ast F_i$ where $F_i$ is a finitely generated free group. Consequently, the (relative) Baum-Connes assembly map 
\[   \beta\colon  KO_n(Y_i, \partial Y_i) = KO_n(\{pt\}, BG_i) \to  KO_n(D_i^\ast) =  KO_n(C^\ast_{\max}(\{e\}, G_i)) \] 
is an isomorphism.  In particular, it follows that the maps $\alpha, \beta$ and $\sigma$ in the above commutative diagram are isomorphisms.  

Since $c$ is an element in $\sideset{}{^1}\varprojlim KO_3(Y_j, \partial Y_j)$, it follows from the above commutative diagram that its image  in  $ \varprojlim KO_2(Y_i, \partial Y_i)$ equals $0$, hence $0$ in $\varprojlim KO_2(D_i^\ast)$. In particular, it is $0$ in each $KO_2(D_i^\ast)$. By the discussion following the unstable relative Gromov-Lawson-Rosenberg conjecture (Conjecture \ref{conj:rGLR}), since $G_i = \mathbb Z^2\ast F_i$ with $F_i$ a finitely generated free group,  each $M_i$ admits a positive scalar curvature metric that is collared near $\partial M_i$.

Note that any compact subset $V\subset M$ is contained in some $M_i$.  Since each $M_i$ has a metric of positive scalar curvature which has product structure near the boundary, by stretching the collar neighborhood of the boundary (which does not change the scalar curvature) if necessary,  it follows that for any  compact subset $V\subset M$ and any $\rho>0$, there exists  $M_i$ equipped with a metric $g_i$ of positive scalar curvature such that $\Sc(g_i)\geq 1$ and  the closed $\rho$-neighborhood $N_\rho(V)$ of $V$  is contained in $M_i$, hence $N_\rho(V)$ is compact. 

Now we show that the Riemannian metric $g_i$ on $M_i$ can be extended to an (non-complete) Riemannian metric $g$ on $M$ with $\Sc(g)\geq 1$. Let us denote by $h_i$ the restriction of $g_i$ on $\partial M_i$. Let $m_i$ be a real number such that the mean curvature of $\partial M_i$ satisfies \[ H_{g_i}(\partial M_i)_x >  -m_i  \]
 for all $x\in \partial M_i$.  By our choice of $M$,  the  annulus region $A_{i} = M_{i+1} -\interior{M}_{i} $ obtained from $\partial M_{i+1}\times I$ by attaching handles of index $\geq 2$. Then it follows from Proposition \ref{prop:ext} that  there exists a smooth Riemannian metric $\tilde {g}_i$ on $A_{i}$ such that 
 \begin{enumerate}[$(1)$]
 	\item $\tilde {g}_i$ extends $h_i$, that is, \[ \tilde {g}_i|_{\partial M_i } = h_i  \]
 	\item $\Sc(\tilde {g}_i) \geq 1$,
 	\item  $ H_{g}(\partial M_i)_x\geq m_i +1$, 
 	for all $x\in  \partial M_i$.
 \end{enumerate}
By applying Miao's gluing  lemma (Lemma \ref{lm:glue}), we obtain a smooth Riemannian metric $g_{i+1}$ on $M_{i+1}= M_{i}\cup_{\partial M_i} A_{i}$ such that 
   \begin{enumerate}[(i)]
  	\item ${g}_{i+1} = g_i$ on $M_i - N_{\varepsilon_i}(\partial M_i)$, where $\varepsilon_i$ is a sufficiently small\footnote{Here ``sufficiently small" means that the $\varepsilon_i$-neighborhood $N_{\varepsilon_i}(\partial M_{i})$ is disjoint from $M_{i-1}$.   }  positive number, 
  	\item $\Sc(\tilde {g}_{i+1}) \geq 1$.
  \end{enumerate}
Now repeat the above argument inductively on each $A_{j}$ for $j > i+1$. In the end, we obtain  a (non-complete) Riemannian metric on $M$ with scalar curvature $\geq 1$ such that the closed $\rho$-neighborhood $N_\rho(V)$ of $V$ in $M$ is compact.

Now we shall complete the proof by showing that $M$ itself does not have a complete  metric of uniformly positive scalar curvature. Let us prove this by contradiction. Assume to the contrary that $M$ admits a complete metric of uniformly positive scalar curvature. We can choose a metric on $\uspace_G$ under which  $\{Y_i, Y_{ij}\}$ is an admissible exhaustion of $\uspace_G$ and  the proper map $f\colon M \to \uspace_G$ is a continuous  proper coarse map. By \cite[Theorem 3.3]{MR4045309}, the higher index of $D_M$ under the index map 
\[ \sigma\colon KO^{lf}_\ast(\uspace_G)  \to KO_\ast(\ualg(\uspace_G))  \] 
vanishes, that is, $\sigma(f_\ast[D_M]) = 0 \in KO_\ast(\ualg(\uspace_G)).$ This contradicts the fact that $\sigma$ is an isomorphism and the   assumption that $f_\ast[D_M]  = c\neq 0$. 
 This finishes the proof. 
\end{proof}

\begin{remark}
In the above proof, we used Proposition \ref{prop:ext} and  Miao's gluing  lemma (Lemma \ref{lm:glue}) to construct an incomplete  Riemannian metric $g_{\rho}$ on $M$ with scalar curvature $\geq 1$ such that the closed $\rho$-neighborhood $N_\rho(V)$ of $V$ in $M$ is compact. A key ingredient in the proof of Proposition \ref{prop:ext} is Claim \ref{claim:codimensionthree} which roughly says that if $\Sigma_2$ is obtained from $\Sigma_1$ via surgeries of codimension $\geq 2$ and $Z$ is the cobordism representing the trace of the surgeries, then $Z\cup_{\Sigma_2} Z$ is obtained from $\Sigma_1\times I$ via surgeries of codimension $\geq 3$. In fact, one can also   construct such an incomplete metric $g_\rho$ on $M$ by directly  applying Claim \ref{claim:codimensionthree} together with the surgery theory of positive scalar curvature Gromov-Lawson  \cite[Theorem A]{MGBL80b} and Schoen-Yau \cite[Corollary 6]{RSSY79b}. We thank Bernhard Hanke for pointing this out to us. Here is a sketch of this alternative argument. The first step is the same as in the proof. For any  compact subset $V\subset M$ and any $\rho>0$, there exists  $M_i$ equipped with a metric $g_i$ of positive scalar curvature such that $\Sc(g_i)\geq \frac{3}{2}$ and  the closed $\rho$-neighborhood $N_\rho(V)$ of $V$  is contained in $M_i$, hence $N_\rho(V)$ is compact. We need to extend the Riemannian metric $g_i$ on $M_i$  to an (incomplete) Riemannian metric $g_\rho$ on $M$ with $\Sc(g_\rho)\geq 1$. By our choice of $M$,  the  annulus region $A_{i} = M_{i+1} -\interior{M}_{i} $ is obtained from $\partial M_{i+1}\times I$ by attaching handles of index $\geq 2$. Consider a new manifold $  X = M_{i+1}\cup_{\partial M_{i+1}} (-A_{i})$ obtained by gluing another copy of $A_i$ to $M_{i+1}$ along the common boundary $\partial M_{i+1}$. By Claim \ref{claim:codimensionthree}, $A_i\cup_{\partial M_{i+1}} (-A_{i})$ is obtained from $\partial M_{i}\times I$ via surgeries of codimension $\geq 3$. Since $g_i$ is a positive scalar curvature metric on $M_i$ such that $\Sc(g_i)\geq 1$, if we view $M_i \cong M_i\cup_{\partial M_{i}} (\partial M_{i}\times I) $, it follows from the surgery theory of positive scalar curvature that $X$ admits a positive scalar curvature metric $g_{i+1}$ such that $g_{i+1} = g_i$ away from  $A_i\cup_{\partial M_{i+1}} (-A_{i}) \subset X$ and $\Sc(g_{i+1}) \geq \frac{3}{2}-\varepsilon_i$ for some arbitrarily small $\varepsilon_i$. We denote the restriction of $g_{i+1}$ on $M_{i+1} \subset X$ still by $g_{i+1}$. Now we repeat this argument for each $A_{k} = M_{k+1} -\interior{M}_{k}$ and finally obtain 
an incomplete  Riemannian metric $g_{\rho}$ on $M$ with scalar curvature $\geq 1$ such that the closed $\rho$-neighborhood $N_\rho(V)$ of $V$ in $M$ is compact. 
\end{remark}

\section{Positive answers to Gromov's compactness question}\label{sec:pos}

In our construction of negative examples to Gromov's compactness question (cf. Theorem \ref{thm:main}), we have seen that the non-vanishing of a certain $\sideset{}{^1}\varprojlim$ index is an obstruction to the existence of complete Riemannian metrics of uniformly positive scalar curvature on some noncompact spin manifolds. This suggests that Gromov's compactness question has a positive answer if in addition one assumes that an appropriate $\sideset{}{^1}\varprojlim$ index  vanishes. More precisely, we have the following conjecture (Conjecture \ref{conj:limcomp}) based on Gromov's compactness quesiton.

\begin{conjecture}\label{conj:limcomp}
	Let $X$ be a smooth spin manifold.  Suppose for any given compact subset $V\subset X$ and any $\rho>0$, there exists a (non-complete) Riemannian metric on $X$ with scalar curvature $\geq 1$ such that the closed $\rho$-neighborhood $N_\rho(V)$ of $V$ in $X$ is compact. If the $\sideset{}{^1}\varprojlim$ higher index of the Dirac operator of $X$ vanishes,  then $X$ admits a complete Riemannian metric with scalar curvature $\geq 1$. 
\end{conjecture}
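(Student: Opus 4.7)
The plan is to build a complete metric of uniformly positive scalar curvature on $X$ by an inductive extension across an admissible exhaustion, harvesting the $\sideset{}{^1}\varprojlim$-vanishing hypothesis exactly where compatibility between consecutive pieces is needed. The overall strategy mirrors, in reverse, the construction of negative examples in Section~\ref{sec:neg}: there one exploited a nontrivial $\sideset{}{^1}\varprojlim$ class to produce an obstruction, here one hopes to use its vanishing to produce a coherent extension.

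First, I would fix an admissible exhaustion $\{X_i\}$ of $X$ by compact codimension-zero submanifolds with boundary, and set $\Gamma = \pi_1(X)$, $G_i = \pi_1(X - X_i)$ (a groupoid if $X - X_i$ is disconnected). For each fixed $i$, applying the compactness hypothesis to $V = X_i$ with $\rho$ as large as we please, and then invoking Theorem~\ref{thm:quanvanish}, forces $\ind_{\Gamma, G_i}(D) = 0$ in $KO_n(C^\ast_{\max}(\Gamma, G_i))$. Hence the image of $\sigma(D)$ under the map $\theta\colon KO_n(\ualg(X)) \to \varprojlim KO_n(C^\ast_{\max}(\Gamma, G_i))$ vanishes, and by the Milnor sequence $\sigma(D)$ lifts to a unique class $c \in \sideset{}{^1}\varprojlim KO_{n+1}(C^\ast_{\max}(\Gamma, G_i))$, namely the $\sideset{}{^1}\varprojlim$ higher index. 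The hypothesis of the conjecture yields $c = 0$, so $\sigma(D) = 0 \in KO_n(\ualg(X))$.

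Next I would translate this $K$-theoretic vanishing into geometry. Invoking a suitable unstable relative Gromov-Lawson-Rosenberg statement for each pair $(\pi_1(X_i), \pi_1(\partial X_i))$, the vanishing of $\ind_{\Gamma, G_i}(D)$ should produce on $(X_i, \partial X_i)$ a positive scalar curvature metric $h_i$ collared at the boundary. The vanishing of the $\sideset{}{^1}\varprojlim$-class is what should let one choose the $h_i$ compatibly across levels: on the annulus $A_i = X_{i+1} - \interior{X}_i$ there ought to be a PSC concordance rel $\partial X_i$ between $h_i$ and $h_{i+1}|_{A_i}$, and morally $\sideset{}{^1}\varprojlim$ is precisely the obstruction distinguishing a coherent family of such concordances from individually chosen ones. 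I would then first apply surgery theory (Lemma~\ref{lm:surgery2} and Theorem~\ref{thm:realization}) to arrange each $A_i$ to be obtained from $\partial X_{i+1} \times I$ by attaching handles of index $\geq 2$, so that Proposition~\ref{prop:ext} produces compatible boundary data with $\Sc \geq 1$ and prescribed mean curvature; Miao's gluing lemma (Lemma~\ref{lm:glue}) then smooths the concatenation into a globally smooth complete PSC metric on $X$.

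The main obstacle will be the second step. The unstable relative Gromov-Lawson-Rosenberg conjecture is known to fail in general (Schick; Dwyer-Schick-Stolz), so without additional assumptions on the groups $\Gamma$ and $G_i$ the passage from $K$-theoretic vanishing to actual geometric PSC metrics with controlled boundary behaviour is not automatic. I would expect the clean version of the conjecture one can actually prove to impose an explicit GLR hypothesis on the pair $(\Gamma, \pi_1^\infty(X))$ together with a tameness condition ensuring that the groupoid at infinity is under control and that only finitely many distinct pairs $(\Gamma, G_i)$ appear up to isomorphism; this is precisely the setting of Theorem~\ref{thm:pos}, where $M$ is $1$-tame and the unstable relative GLR conjecture is assumed for the single pair $(\Gamma, \pi_1^\infty(M))$.
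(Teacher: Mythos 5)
You are attempting to prove a statement that the paper itself does not prove: Conjecture~\ref{conj:limcomp} is offered only as a guide, and the authors explicitly state that, because the unstable (relative) Gromov--Lawson--Rosenberg conjecture fails in general (Schick, Dwyer--Schick--Stolz), the conjecture ``is most likely not true in its complete generality.'' Your proposal correctly locates this as the fatal gap in the second step, and correctly identifies Theorem~\ref{thm:pos} (the $1$-tame case with an explicit GLR hypothesis on $(\Gamma,\pi_1^\infty(M))$) as the version that can actually be proved; your first step (compactness hypothesis plus Theorem~\ref{thm:quanvanish} forces each $\ind_{\Gamma,G_i}(D)=0$, hence $\theta(\sigma(D))=0$ and $\sigma(D)=\zeta(c)$ with $c$ the $\sideset{}{^1}\varprojlim$ index) matches the paper's setup exactly.

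Two points in your sketch would fail even in the provable case, so they are worth correcting. First, the combination ``Proposition~\ref{prop:ext} plus Miao's gluing lemma'' cannot yield a \emph{complete} metric of \emph{uniformly} positive scalar curvature: Proposition~\ref{prop:ext} gives no control whatsoever of the metric (or mean curvature) at the outgoing boundary $\Sigma_2$, which is precisely why the paper uses this combination only to produce the \emph{incomplete} metrics in the negative examples of Section~\ref{sec:neg}. The proof of Theorem~\ref{thm:pos} instead uses the $\pi_1$-isomorphisms on both ends of each annulus (via Siebenmann's theorem and Lemma~\ref{lm:surgery}) to realize each $A_i$ as a trace of codimension~$\geq 3$ surgeries, so that Gromov--Lawson/Schoen--Yau surgery extends the metric while keeping it \emph{collared} at $\partial K_{i+1}$ with only a controlled loss $2-\sum_\beta 2^{-\beta}\geq 1$ in the scalar curvature bound; the collars can then be stretched to force completeness. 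Second, your heuristic that the $\sideset{}{^1}\varprojlim$-vanishing is what makes the metrics $h_i$ concordance-compatible is not how the paper uses it: in the $1$-tame setting the system $\pi_1(M-K_{i+1})\to\pi_1(M-K_i)$ is eventually constant, so the $\sideset{}{^1}\varprojlim$ hypothesis is automatically satisfied and plays no role in the proof of Theorem~\ref{thm:pos}; only the single relative index $\ind_{\Gamma,G}(D_V)$ for $V=K_1$ is used, and the rest of the construction is purely geometric.
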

The  $\sideset{}{^1}\varprojlim$ higher index was introduced in Definition \ref{def:limone}.  The assumption ``for any given compact subset $V\subset X$ and any $\rho>0$, there exists a (non-complete) Riemannian metric on $X$ with scalar curvature $\geq 1$ such that the closed $\rho$-neighborhood $N_\rho(V)$ of $V$ in $X$ is compact" implies that the $\sideset{}{^1}\varprojlim$ higher index of the Dirac operator of $X$ is well-defined (cf. the discussion before Definition \ref{def:limone}). 

Due to the failure of the unstable Gromov–Lawson-Rosenberg conjecture (\cite[Example 2.2]{MR1632971}), hence the failure of the unstable relative Gromov–Lawson-Rosenberg conjecture, Conjecture \ref{conj:limcomp} is most likely not true in its complete generality. Rather one should interpret it as a guide towards the correct compactness statement for positive scalar curvature metrics on open manifolds. On the other hand, as supporting evidence for the philosophy behind Conjecture \ref{conj:limcomp},  we verify the conjecture for a class of $1$-tame  spin manifolds.\footnote{With extra geometric conditions, Theorem \ref{thm:pos} of this section still holds for nonspin manifolds.}

\begin{definition}\label{def:tame}
	A manifold $M$ is said to be $1$-tame if there is a sequence of codimension zero compact submanifolds $\{K_i\}_{i \geq 1}$ (with boundary) such that 
	\begin{enumerate}
		\item $K_i\subset K_{i+1}$ and $M = \cup_{i\geq 1} K_i$, 
		\item  all $M- K_{i}$ have the same finite number of connected components. If we denote the connected components of $M- K_{i}$ by $\{P_{i,j}\}_{1\leq j \leq m}$ with $P_{i+1, j} \subset P_{i, j}$, then the natural  homomorphism $\pi_1(P_{i+1, j}) \to   \pi_1(P_{i, j})$ induced by the inclusion  is  an isomorphism for all $i$ and $j$. 
	\end{enumerate}
\end{definition}
For simplicity, we define $\pi_1(M- K_i)$ to be the disjoint union $\coprod_j \pi_1(P_{i,j})$.  We say  the  map   $\pi_{1}(M - K_{i+1}) \to \pi_1(M- K_i)$ is an isomorphism if the condition (2) above is satisfied. In this case, we call  $\pi_1(M- K_i)$ the fundamental groupoid at infinity of $M$, denoted by $\pi_1^\infty(M) = \coprod_{\alpha = 1}^\ell G_\alpha$. Throughout this section, we assume each $G_\alpha$ is finitely presented. 

\begin{remark}\label{rmk:tame}
By a theorem of Siebenmann \cite[Theorem 3.10]{MR2615648}, for any smooth open manifold $M$ with $\dim M \geq 5 $, if $M$ is $1$-tame, then we can choose  $\{K_i\}_{i \geq 1}$ in Definition \ref{def:tame} so that the inclusion $\partial K_i\to (M - \interior{K}_i)$ induces an isomorphism  
$\pi_1(\partial K_i) \to \pi_1(M - \interior{K}_i) $  for each $i\geq 1$. As a consequence, if we denote the annulus region between $\partial K_i$ and $\partial K_{i+1}$ by $A_i$, then  the inclusions $\partial K_i \to A_{i}$ and $\partial K_{i+1}\to A_{i}$ induce isomorphisms (component-wise) on $\pi_1$  \cite[Lemma 3.12]{MR2615648}. 
\end{remark}

The following theorem gives a positive answer to Gromov's compactness question for $1$-tame spin manifolds, provided that  the unstable relative Gromov-Lawson-Rosenberg conjecture holds for the relevant fundamental groups. 

\begin{theorem}\label{thm:pos}
	Let $M$ be a noncompact $1$-tame spin manifold of dimension $ n\geq 6$. Let $\Gamma = \pi_1(M) $ and $G = \pi_1^\infty(M)$. Assume that the unstable relative Gromov-Lawson-Rosenberg conjecture holds for the pair $(\Gamma, G)$.  Suppose for any given compact subset $V\subset M$ and any $\rho>0$, there exists a (non-complete) Riemannian metric on $M$ with scalar curvature $\geq 1$ such that the closed $\rho$-neighborhood $N_\rho(V)$ of $V$ in $M$ is compact. Then $M$ admits a complete Riemannian metric of uniformly positive scalar curvature. 
\end{theorem}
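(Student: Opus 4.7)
My strategy combines four ingredients: a refined exhaustion from Siebenmann's theorem, vanishing of the relative higher index via the compactness hypothesis and Theorem~\ref{thm:quanvanish}, the assumed unstable relative Gromov--Lawson--Rosenberg conjecture for $(\Gamma, G)$, and an inductive extension-and-gluing argument parallel to the one used for the negative examples in Section~\ref{sec:neg}.

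By Remark~\ref{rmk:tame}, $1$-tameness of $M$ together with $\dim M\geq 6$ lets me fix an exhaustion $\{K_i\}$ by codimension-zero compact submanifolds with $\pi_1(\partial K_i)\to \pi_1(M-\interior K_i)$ a component-wise isomorphism for each $i$, and with $\pi_1(K_i)=\Gamma$ once $i$ is large. Writing $A_i = K_{i+1}-\interior K_i$, both boundary inclusions $\partial K_i\hookrightarrow A_i$ and $\partial K_{i+1}\hookrightarrow A_i$ then induce $\pi_1$-isomorphisms. For each such $i$, the compactness hypothesis provides a (possibly incomplete) metric on $M$ with scalar curvature $\geq 1$ on $N_\rho(K_i)$ for $\rho$ as large as we wish; extending outside a compact neighborhood of $K_i$ to a complete metric does not affect the scalar curvature on $N_\rho(K_i)$, so Theorem~\ref{thm:quanvanish} forces $\ind_{\Gamma,G}(D)=0$ in $KO_n(C^*_{\max}(\Gamma,G))$. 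This coincides with the relative higher index of the compact pair $(K_i,\partial K_i)$, and hence by the assumed unstable relative GLR conjecture each $K_i$ carries a psc metric collared at $\partial K_i$.

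The core step is to assemble these into a single complete psc metric. I would build inductively psc metrics $g^{(i)}$ on $K_i$ collared at $\partial K_i$, with $g^{(i+1)}|_{K_i}=g^{(i)}$. Given $g^{(i)}$, I would extend across $A_i$ by applying Proposition~\ref{prop:ext} with initial boundary data $g^{(i)}|_{\partial K_i}$, prescribing the mean curvature at $\partial K_i$ large enough that the hypothesis of Miao's gluing lemma (Lemma~\ref{lm:glue}) is satisfied against the mean curvature inherited from $g^{(i)}$. The gluing lemma then yields a smooth psc metric $g^{(i+1)}$ on $K_{i+1}=K_i\cup_{\partial K_i}A_i$ that agrees with $g^{(i)}$ outside a small collar of $\partial K_i$, and passing to the limit produces the desired complete psc metric of scalar curvature $\geq 1$ on $M$.

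The main obstacle is the hypothesis of Proposition~\ref{prop:ext}: it requires $A_i$ to be obtained from $\partial K_i\times I$ by attaching handles of index $\geq 2$. Lemmas~\ref{lm:surgery} and~\ref{lm:surgery2} as stated only replace $A_i$ by a different cobordism with this property, whereas we must work with the $A_i$ sitting inside the fixed manifold $M$. My plan is to exploit the freedom to enlarge $K_i$ within $M$: given any handle decomposition of $A_i$ from $\partial K_i\times I$, the $\pi_1$-isomorphism condition combined with $\dim A_i\geq 6$ enables the standard Whitney-trick cancellation and rearrangement moves to list all $0$- and $1$-handles first. Absorbing those low-index handles into $K_i$ produces an enlarged submanifold $\widetilde K_i\subset M$ with $\pi_1(\widetilde K_i)=\Gamma$, $\pi_1(\partial \widetilde K_i)=G$, and for which the new annulus to $\widetilde K_{i+1}$ carries handles only of index $\geq 2$. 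Taking this refined (still cofinal) exhaustion, the inductive extension above applies verbatim and the theorem follows.
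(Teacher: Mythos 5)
Your first two paragraphs follow the paper's proof closely and are correct: the Siebenmann refinement of the exhaustion, the use of Theorem~\ref{thm:quanvanish} to kill the relative higher index of $(K_i,\partial K_i)$, and the appeal to the unstable relative Gromov--Lawson--Rosenberg conjecture to produce a psc metric on a compact piece collared at its boundary all match the paper. Where you diverge is the extension step, and that is where the proposal has a genuine gap. The paper extends across each annulus $A_i$ by the Gromov--Lawson/Schoen--Yau/Gajer surgery method, which produces a metric that is again \emph{collared at the outer boundary} $\partial K_{i+1}$; the collar is then stretched so that $\mathrm{dist}(\partial K_i,\partial K_{i+1})\geq i$, and this is exactly what makes the limit metric \emph{complete}. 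Your route instead uses Proposition~\ref{prop:ext} plus Miao's gluing lemma --- the machinery of the negative examples in Theorem~\ref{thm:main}, whose output is explicitly an \emph{incomplete} metric. Proposition~\ref{prop:ext} gives no control whatsoever at $\Sigma_2$ (see the remark following it): no collar, and no lower bound on the width of $A_i$ in the new metric. Nothing in your induction prevents $\sum_i \mathrm{dist}_g(\partial K_i,\partial K_{i+1})<\infty$, in which case ``passing to the limit'' yields an incomplete metric and the theorem is not proved. You must either restore collared control at each $\partial K_{i+1}$ (as the paper does, at the price of needing the annuli to be traces of codimension~$\geq 3$ surgeries rather than $\geq 2$) or supply some other mechanism forcing the widths to diverge.

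Two smaller points on your handling of the handle condition. First, the hypothesis of Proposition~\ref{prop:ext} is that $Z$ is built from $\Sigma_2\times I$ --- the end \emph{without} prescribed metric --- by handles of index $\geq 2$; for extending outward from $\partial K_i$ you therefore need $A_i$ built on $\partial K_{i+1}\times I$ with handles of index $\geq 2$, not on $\partial K_i\times I$ as you wrote, so your ``absorb the $0$- and $1$-handles into $K_i$'' manoeuvre is aimed at the wrong end and would establish the wrong hypothesis. Second, the worry that Lemma~\ref{lm:surgery2} only produces a \emph{different} cobordism is legitimate, but the fix is simpler than modifying the exhaustion: eliminating $0$- and $1$-handles by handle cancellation and trading (using $\pi_1(\partial K_{i+1})\to\pi_1(A_i)$ an isomorphism and $\dim\geq 6$) is an operation on handle decompositions of the \emph{fixed} manifold $A_i$, so $A_i$ itself already satisfies the index-$\geq 2$ hypothesis; the step that genuinely changes the cobordism in Lemma~\ref{lm:surgery} is the interior surgery killing $\pi_2(W,\partial_+W)$ needed to remove $2$-handles, which is only relevant if one insists on codimension~$\geq 3$ as the paper's collared-extension route does.
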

\begin{proof}
	Let $\{K_i\}$ be a sequence of codimension zero compact submanifolds of $M$ satisfying the properties given in Definition \ref{def:tame}. By Remark \ref{rmk:tame}, without loss of generality, we assume the inclusion $\partial K_i\to (M - \interior{K}_i)$ induces an isomorphism  
	$\pi_1(\partial K_i) \to \pi_1(M - \interior{K}_i) $  for each $i\geq 1$. If we denote the annulus region between $\partial K_i$ and $\partial K_{i+1}$ by $A_i$, then  the inclusions $\partial K_i \to A_{i}$ and $\partial K_{i+1}\to A_{i}$ induce isomorphisms (component-wise) on $\pi_1$, cf. \cite[Lemma 3.12]{MR2615648}.

	Choose  $\rho >0$ to be sufficiently large and $V = K_1$. By the assumption, there exists a Riemannian metric $g_0$ on $M$ such that $\Sc(g_0)\geq 1$ and the closed $\rho$-neighborhood $N_\rho(V)$ of $V$ in $M$ is compact. The Riemannian metric $g_0$ is incomplete in general. But we can always modify the metric on  $M - N_\rho(V)$ while keeping the metric on $N_\rho(V)$ fixed so that the resulting new metric $h$ is complete Riemannian metric on $M$ such that $\Sc(h)\geq 1$ on $N_\rho(V)$.

	Let us write $M - V$ as a disjoint union of connected components 
	\[ M - V = \sqcup_{\alpha=1}^\ell Y_\alpha \] and  similarly $\partial V = \sqcup_{\alpha=1}^\ell \partial Y_\alpha$.   By the discussion above,   $\pi_1(\partial Y_\alpha)\to  \pi_1(Y_\alpha)$ is an isomorphism.  For simplicity, we shall write $G = \sqcup_{\alpha = 1}^\ell G_\alpha$ with $G_\alpha = \pi_1(\partial Y_\alpha)$.  
	Let us still denote by $h$   the restriction of the metric $h$ on $Y_\alpha$.  Each inclusion $\iota_\alpha\colon Y_\alpha\to M$ induces a homomorphism $\iota_\alpha\colon G_\alpha \to \Gamma$. Consequently, we have a $C^\ast$-algebra homomorphism 
	\[   \iota_\ast = \oplus_{\alpha=1}^\ell  (\iota_\alpha)_\ast\colon \bigoplus_{\alpha = 1}^\ell  C_{\max}^\ast(G_\alpha)  \to C^\ast_{\max}(\Gamma). \]
	We define  $C^\ast_{\max}(\Gamma, G)$ to be  the $7$th suspension $\sus^7C_{\iota_\ast} \cong C_0(\mathbb R^7)\otimes C_{\iota_\ast} $ of  the mapping cone $C^\ast$-algebra  $C_{\iota_\ast}$. With the above setup, the relative higher index $\ind_{\Gamma, G}(D_V)$ of the Dirac operator $D_V$ of $V$ is an element in  $KO_n(C^\ast_{\max}(\Gamma, G))$.  See Section \ref{sec:relative} for more details on relative higher index.  As long as $\rho$ is sufficiently large,  it follows from Theorem \ref{thm:quanvanish} that  $\ind_{\Gamma, G}(D_V)$  vanishes in  $KO_n(C^\ast_{\max}(\Gamma, G))$.

    By assumption, the unstable relative Gromov-Lawson-Rosenberg conjecture holds for the pair $(\Gamma, G)$. It follows that\footnote{Here we have implicitly used the fact that  $\pi_1(\partial Y_\alpha)\to  \pi_1(Y_\alpha)$ is an isomorphism.} $V$ admits a Riemannian metric $g_V$ of positive scalar curvature $\geq 2$ that is collared near the boundary. Now it only remains to show that the metric $g_V$ on $V$ extends to a complete Riemannian metric on $M$ with scalar curvature $\geq 1$. 
    
    Consider the cobordism $A_i$ between $\partial K_i$ and $\partial K_{i+1}$. 
    Since the inclusions $\partial K_i \to A_{i}$ and $\partial K_{i+1}\to A_{i}$ induce isomorphisms (component-wise) on $\pi_1$. It follows from Lemma \ref{lm:surgery} that $A_i$ can be viewed as  the trace of surgeries of codimension $\geq 3$ from $\partial K_{i}$ to $\partial K_{i+1}$.     By the surgery theory for positive scalar curvature of Gromov-Lawson  \cite[Theorem A]{MGBL80b} and Schoen-Yau \cite[Corollary 6]{RSSY79b}, we can  extend the metric $g_V$ on $V = K_1$ to a Riemannian metric $g_2$ on $K_2$ such that $\Sc(g_2) \geq 2 - \frac{1}{2}$ and $g_2$ is collared near $\partial K_2$. By stretching out\footnote{Note that such a stretching does not change the scalar curvature lower bound of $g_2$ since $g_2$ has product structure near $\partial K_2$.} a small  tubular neighborhood of $\partial K_2$ if necessary, we can assume that $\mathrm{dist}(\partial K_1, \partial K_2)\geq 2$. Now by the surgery theory for positive scalar curvature, we can inductively extend  the metric $g_i$ on $K_i$ to a Riemannian metric on $g_{i+1}$ on $K_{i+1}$ such that  $\Sc(g_{i+1}) \geq 2 - \sum_{\beta=1}^{i}\frac{1}{2^\beta}$, $g_{i+1}$ is collared near $\partial K_{i+1}$ and $\mathrm{dist}(\partial K_i, \partial K_{i+1})\geq i$. Hence by induction, we eventually obtain a complete Riemannian metric on $M$ with scalar curvature $\geq 1$. This finishes the proof.

\end{proof}

\end{document}